\documentclass[11pt, reqno]{amsart}
\usepackage{changes}
\usepackage{amssymb, amsmath, amsthm, mathrsfs, MnSymbol}
\usepackage{hyperref}
\usepackage{enumitem}
\usepackage[most]{tcolorbox}
\usepackage{xcolor}
\usepackage{float}
\usepackage{tikz}

\newtheorem*{theoA}{Theorem A}
\newtheorem*{theoB}{Theorem B}
\newtheorem*{theoC}{Theorem C}
\newtheorem*{theoD}{Theorem D}
\newtheorem*{theoE}{Theorem E}

\newtheorem{theo}{Theorem}[section]
\newtheorem{lem}{Lemma}[section]

\newtheorem{exm}{Example}[section]

\newtheorem{rem}{Remark}[section]
\newcommand{\ol}{\overline}

\newcommand{\be}{\begin{equation}}
\newcommand{\ee}{\end{equation}}
\newcommand{\beas}{\begin{eqnarray*}}
\newcommand{\eeas}{\end{eqnarray*}}
\newcommand{\bea}{\begin{eqnarray}}
\newcommand{\eea}{\end{eqnarray}}

\numberwithin{equation}{section}
\usepackage{colortbl}
\definecolor{headerblue}{RGB}{70,139,100}
\definecolor{rowblue}{RGB}{235,242,250}
\definecolor{rowwhite}{RGB}{250,250,250}
\definecolor{highlightgreen}{RGB}{220,242,220}

\begin{document}

\title[ F\MakeLowercase {inite Order Transcendental Entire Solutions of Coupled...}]{\Large F\Large\MakeLowercase {inite Order Transcendental Entire Solutions of Coupled} F\MakeLowercase {ermat}-\MakeLowercase {Type Difference Equations in Several Complex Variables}} 

\date{}
\author[J. B\MakeLowercase{anerjee} \MakeLowercase{and} A. B\MakeLowercase{anerjee}]{J\MakeLowercase{hilik} B\MakeLowercase{anerjee}$^1$$^*$ \MakeLowercase{and} A\MakeLowercase {bhijit} B\MakeLowercase {anerjee}$^2$}

\address{$^1$Department of Mathematics, University of Kalyani, West Bengal 741235, India.}
\email{jhilikbanerjee38@gmail.com, jhilikmath24@klyuniv.ac.in}
\address{$^2$Department of Mathematics, University of Kalyani, West Bengal 741235, India.}
\email{abanerjee\_kal@yahoo.co.in, abanerjee@klyuniv.ac.in}

\renewcommand{\thefootnote}{}
\footnote{2020 \emph{Mathematics Subject Classification}: 39A45, 32H30, 39A14 and 35A20.}
\footnote{\emph{Key words and phrases}: Several complex variables, meromorphic functions, Fermat-type
equations, Nevanlinna theory, difference equations.}
\footnote{*\emph{Corresponding Author}: Jhilik Banerjee}

\renewcommand{\thefootnote}{\arabic{footnote}}
\setcounter{footnote}{0}

\begin{abstract} Motivated by recent developments in complex difference equations and Nevanlinna theory in several complex variables, we investigate finite-order transcendental entire solutions of the coupled Fermat-type difference system:
\beas
\begin{cases}
f_1^{n_1}(z)+ f_2^{m_1} \left(z+c \right) = 1,\\
f_2^{n_2}(z) + f_1^{m_2} \left(z+c\right) = 1,
\end{cases} 
\eeas where $z,c=(c_1,c_2,\ldots,c_n) \in \mathbb{C}^n$ for various choices of $n_i,m_i$, $i=1,2$.
where $n_i,m_i\in\mathbb N$ and $n_i+m_i\ge2$ $(i=1,2)$. Extending the classical investigations of Gross--Yang, Liu, Liu--Cao--Cao and more recently, Xu \emph{et al.} in one and two complex variables, to a general coupled system in $\mathbb C^n$ we establish a complete characterization of all finite-order transcendental entire solutions. We have determined that the solution structure is completely determined by the relative sizes of the exponents.
\end{abstract}

\thanks{Typeset by \AmS -\LaTeX}
\maketitle


\section{Introduction}

One of the main topics in complex difference theory is the study of entire and meromorphic solutions of nonlinear functional equations. Among them, Fermat-type functional equations have attracted considerable attention because of their rich algebraic structure and interesting growth behavior. 
Their investigation has led to important connections among Nevanlinna theory, difference equations, differential equations and value distribution theory.

The classical algebraic equation $$x^n+y^m=1$$
serves as a natural model for many analytic problems.

Therefore, the existence, nonexistence and characterization of entire and meromorphic solutions of Fermat-type equations have become important research topics in modern complex analysis.

A basic example is the Fermat-type functional equation
$$f^m(z)+g^n(z)=1.$$

Many results show that transcendental solutions of this equation are highly restricted.
For example, when $m=n\geq 4$, the equation has no transcendental meromorphic solutions, and when $m=n\geq 3$, it has no transcendental entire solutions (\cite{FG2,M1,Taylor+Wiles_AnnMath_1995,Wiles_AnnMath_1995}).
These findings demonstrate the strong rigidity of Fermat-type equations and have motivated extensive studies of their differential, difference, and differential-difference counterparts.

The functional equation $f ^m(z)+ g^n(z) = 1$ can be regarded as the Fermat-type functional equation. It has drawn considerable attention from many mathematicians in the study of Fermat-type equations. It is known that the Fermat-type equation admits no transcendental meromorphic solutions for $n=m\geq 4$ (\cite{FG2}), and no transcendental entire solutions for $n=m\geq 3$ (\cite{M1}).

\subsection{{\bf Basic notations in several complex variables}}
Recently, Nevanlinna theory in several complex variables has developed an active research area. Extensions of value distribution results from one-variable complex analysis to higher-dimensional structure remains a prior theme of research.
\par Recent investigations have revealed deep connections between Nevanlinna theory and various branches of analysis and geometry. In particular, significant progress has been made in its applications to complex geometry, normal family theory, linear partial differential equations, partial difference equations, partial differential-difference equations, and Fermat-type functional equations. 
The works in references \cite{CK1}, \cite{Dovbush-2021}, \cite{Hao-Zhang-2025}, \cite{L2}, \cite{L1}, \cite{LY11}, \cite{LS1}, \cite{LB}, \cite{FL}, \cite{Majumder-2026}-\cite{MSP}, \cite{GS1}, \cite{XC1} \cite{XW1}, \cite{XH} offer a useful framework for understanding current developments and active areas of research in Nevanlinna theory in several complex variables. These studies are largely based on higher-dimensional Nevanlinna theory and the value distribution theory of meromorphic functions in several complex variables (see \cite{TBC1,CK1,CX1,K1}).

\smallskip
We take $z=(z_1,\ldots,z_n)\in\mathbb C^n$. For any subset $A\subset \mathbb{C}^n$ and $r\ge0$, define (see \cite[pp. 6]{Stoll-1974}) $A[r]=\{z\in A:\|z\|\le r\}$,
$A(r)=\{z\in A:\|z\|<r\}$ and $A\langle r\rangle=\{z\in A:\|z\|=r\}$.
We introduce the function $\tau(z)=\|z\|^2$.
On $\mathbb{C}^n$, the exterior derivative decomposes as $d=\partial+\bar{\partial}$, and we set
$d^c=\frac{\iota}{4\pi}(\bar{\partial}-\partial)$ and $dd^c=\frac{\iota}{2\pi}\partial\bar{\partial}$ (see \cite{HLY1,WS1}).
The standard \text{K\"ahler} form on $\mathbb{C}^n$ is $\upsilon= dd^c \tau >0$.
On $\mathbb{C}^n\setminus\{0\}$, we further define
$\omega = dd^c \log \tau \ge 0$ and $\sigma = d^c\log\tau \wedge \omega^{n-1}$,
where $n=\dim(\mathbb{C}^n)$ (see \cite[pp. 6]{Stoll-1974}).

\smallskip
Let $G\neq \varnothing$ be an open subset of $\mathbb{C}^n$. Let $f$ be a holomorphic function in $\mathbb{C}^n$.
Take $a\in G$. Let $G_a$ be the connectivity component of $G$ containing $a$. Assume $f\mid_{G_a}\not\equiv 0$. Then $f$ admits a local expansion
\begin{align*}
	f(z)=\sum_{\lambda=p}^{\infty}P_\lambda(z-a),
\end{align*}
where $P_\lambda$ is homogeneous of degree $\lambda$ and $P_p\not\equiv0$. The polynomials $P_{\lambda}$ depend on $f$ and $a$ only.
The integer $\mu_f^0(a)=p$ is called the zero multiplicity of $f$ at $a$ (see \cite[pp. 12]{Stoll-1974}).

\smallskip
Let $f$ be a meromorphic function on $G$, where $G\neq \varnothing$ is an open subset of $\mathbb{C}^n$.
Take $a\in G$ and $c\in\mathbb{P}^1$. Let $G_a$ be the component of $G$ containing $a$. If $0\equiv f\mid_{G_a}\not\equiv c$, define $\mu^c_f(a)=0$. Assume $0\not\equiv f\mid_{G_a}\not\equiv c$. Then an open connected neighborhood $U$ of $a$ in $G$ and holomorphic functions $g\not\equiv 0$ and $h\not\equiv 0$ exist on $U$ such that $h. f\mid_U=g$ and $\dim g^{-1}(0)\cap h^{-1}(0)\leq n-2$, where $n=\dim(\mathbb{C}^n)$. Therefore the $c$-multiplicity of $f$ is just $\mu^c_f=\mu^0_{g-ch}$ if $c\in\mathbb{C}$ and $\mu^c_f=\mu^0_h$ if $c=\infty$. The function $\mu^c_f:G\to \mathbb{Z}$ is nonnegative and is called the $c$-divisor of $f$ (see \cite[pp. 12]{Stoll-1974}).
If $f\not\equiv0$ on each component of $G$, the divisor of $f$ is $\mu_f=\mu_f^0-\mu_f^\infty$.
The function $f$ is holomorphic if and only if $\mu_f\ge0$.

\smallskip
A function $\nu:G\to \mathbb{Z}$ is said to be a divisor if and only if for each $a\in G$ an open, connected neighborhood $U$ of $a$ in $G$ and a meromorphic function $f\not\equiv 0$ exist such that $\nu\mid_{U}=\mu_f$. A divisor $\nu:G\to \mathbb{Z}$ is non-negative if and only if for each $a\in G$ an open, connected neighborhood $U$ of $a$ in $G$ and a holomorphic function $f\not\equiv 0$ exist such that $\nu\mid_{U}=\mu_f$ (see \cite[pp. 13]{Stoll-1974}).
We denote $\operatorname{supp}\nu=\overline{\{z\in G:\nu(z)\ne0\}}$.

\smallskip
Take $0<R\leq +\infty$. Let $\nu$ be a divisor on $\mathbb{C}^n(R)$ with $A=\displaystyle \operatorname{supp}\nu$. For $t>0$, the counting function $n_{\nu}$ is defined by
\begin{align*}
	\displaystyle n_{\nu}(t)=t^{-2(m-1)}\int_{A[t]}\nu\; \upsilon^{n-1},
\end{align*}
where $n=\dim(\mathbb{C}^n)$. We know that
\begin{align*}
	\displaystyle n_{\nu}(t)=\int_{A(t)}\nu\;\omega^{n-1}+n_{\nu}(0).
\end{align*}

For $0<s<r<R$, we define the valence function of $\nu$ by
\begin{align*}
	N_{\nu}(r)=N_{\nu}(r,s)=\int_{s}^r n_{\nu}(t)\frac{dt}{t}.
\end{align*}

For $\nu=\mu_f^a$, we write $n(t,a;f)$ and $N(r,a;f)$, with the usual conventions for $a=\infty$.

A non-negative divisor $\nu:\mathbb{C}^n\to \mathbb{Z}_+$ is said to be algebraic if and only if $\nu$ is  the zero divisor of a polynomial. Thus a divisor $\nu:\mathbb{C}^n\to \mathbb{Z}_+$ is algebraic if and only if $n_{\nu}$ is bounded, which implies that $N_{\nu}=O(\log r)$ (see \cite[pp. 19]{Stoll-1974}).

\smallskip
Let $G\neq \varnothing$ be an open subset in $\mathbb{C}^n$. Let $f$ be a meromorphic function in $G$ in the sense that $f$ can be written as a quotient of two relatively prime holomorphic functions. We will write $f = (f_0, f_1)$ where $f_0 \not\equiv 0$. The standard definition of the Nevanlinna characteristic function of $f$ is given by (see \cite[pp. 16-17]{Stoll-1974})
\begin{align*}
	T_f(r,s) := \int_s^r \frac{A_f(t)}{t}\, dt,
\end{align*}
where $0 < s < r$ and
\begin{align*}
	A_f(t)
	= \frac{1}{t^{2n-2}} \int_{\mathbb{C}^n(t)} f^*(\ddot{\omega}) \wedge \upsilon^{n-1}
	= \int_{\mathbb{C}^n(t)} f^*(\ddot{\omega}) \wedge \omega^{\,n-1} + A_f(0),
\end{align*}
where $n=\dim(\mathbb{C}^n)$.
Here the pullback $f^*(\ddot{\omega})$ satisfies
\begin{align*}
	f^*(\ddot{\omega}) = dd^c \log\left( |f_0|^2 + |f_1|^2 \right)
\end{align*}
for all $z$ outside of the set of indeterminacy $I_f:= \{ z \in \mathbb{C}^n : f_0(z) = f_1(z) = 0 \}$ of $f$.

Take $a\in \mathbb{P}^1$ and $0<R\leq +\infty$. Let $f\not\equiv 0$ be a meromorphic function on $\mathbb{C}^n(R)$.
For $0<r<R$, define the compensation of $f$ for $a$ by
\begin{align*}
	m^a_f(r)=\int\limits_{\mathbb{C}^n\langle r\rangle}\log \frac{1}{||f,a||} \;\sigma,
\end{align*}
where $||f,a||$ denotes the chordal distance from $f$ to $a\in\mathbb{P}^1$.
Then the First Main Theorem of Nevanlinna theory becomes
\begin{align*}
	T_f(r)=T_f(r,s)=N_{\mu^a_f}(r,s)+m^a_f(r)-m^a_f(s),
\end{align*}
where $0<s<r$.

There is slightly different way to continue the formulation of Nevanlinna theory from here (see \cite[pp.15]{HLY1}).
Take $0<R\leq +\infty$. Let $f\not\equiv 0$ be a meromorphic function on $\mathbb{C}^n(R)$. Let $0<s<r<R$. 
Now with the help of the positive logarithm function, we define the proximity function of $f$ by
\begin{align*}
	\displaystyle m(r, f)=\int_{\mathbb{C}^n\langle r\rangle} \log^+ |f|\;\sigma \geq  0.
\end{align*}

The characteristic function of $f$ is defined by $T(r,f)=m(r,f)+N(r,f)$. We know that (see \cite[pp.15]{HLY1})
\begin{align*}
	\displaystyle T\left(r,\frac{1}{f}\right)=T(r,f)-\int_{\mathbb{C}^n\langle s\rangle} \log |f|\;\sigma.
\end{align*}

We define $m(r,a;f)=m(r,f)$ if $a=\infty$ and $m(r,a;f)=m\big(r,\frac{1}{f-a}\big)$ if $a$ is finite complex number. Now if $a\in\mathbb{C}$, then the first main theorem of Nevanlinna theory becomes $m(r,a;f)+N(r,a;f)=T(r,f) + O(1)$, where $O(1)$ denotes a bounded function when $r$ is sufficiently large.

Finally, if we compare the functions $T_f(r)$ and $T(r,f)$, then we have (see \cite[pp.19]{HLY1})
\begin{align*}
	T_f(r)=T(r,f)+O(1).
\end{align*}

Clearly $f$ is non-constant, then $T(r, f) \rightarrow \infty$ as $r \rightarrow$ $\infty$. Further $f$ is rational if and only if $T(r,f)=O(\log r)$ (see \cite[pp. 19]{Stoll-1974}).
On the other hand, if $f$ is transcendental, then 
\[\lim\limits_{r \rightarrow \infty} \frac{T(r, f)}{\log r}=+\infty.\] 

The following theorem describes the entire and meromorphic solutions in $\mathbb{C}^n$ of the Fermat-type Eq.
\bea\label{FT1} f^m+g^m=1,\;m>1.\eea

\begin{theoA} \cite[Theorem 1.3]{GS2} For $h:\mathbb{C}^n \to \mathbb{C}$ entire, the solutions of the Eq. {\em(\ref{FT1})} are characterized as follows:
	\begin{enumerate}
		\item[(a)] for $m=2$, the entire solutions are $f=\cos(h)$ and $g=\sin(h)$;
		\item[(b)] for $m>2$, there are no non-constant entire solutions;
		\item[(c)] for $m=2$, the meromorphic solutions are of the form
		$f=\frac{1 - \beta^2}{1 + \beta^2}$ and $g=\frac{2\beta}{1 + \beta^2}$,
		with $\beta$ being meromorphic on $\mathbb{C}^n$;
		\item[(d)] for $m=3$, the meromorphic solutions are of the form
		\[f = \frac{1}{2\wp(h)}\left( 1 + \frac{\wp^{(1)}(h)}{\sqrt{3}}\right)\;\;\text{and}\;\;g = \frac{1}{2\wp(h)} \left(1-\frac{\wp^{(1)}(h)}{\sqrt{3}}\right);\]
		\item[(e)] for $m>3$, there are no non-constant meromorphic solutions.
	\end{enumerate}
\end{theoA}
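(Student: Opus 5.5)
Theorem A is quoted from \cite{GS2}; here is how I would prove it. The idea is to reduce each part to the one-variable case or to a factorization argument that works verbatim in $\mathbb{C}^n$, using only that $\mathbb{C}^n$ is simply connected and that every nonvanishing entire function on $\mathbb{C}^n$ is $e^{h}$ with $h$ entire.

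For (a), set $m=2$ and factor $f^2+g^2=(f+\iota g)(f-\iota g)=1$. Then $f+\iota g$ is an entire function without zeros, so $f+\iota g=e^{\iota h}$ for some entire $h$, and $f-\iota g=e^{-\iota h}$. Solving gives $f=\cos h$ and $g=\sin h$. For (c), with $f,g$ meromorphic, I would use the standard rational parametrization of the conic $x^2+y^2=1$ from the point $(-1,0)$. Put $\beta=g/(1+f)$, which is meromorphic unless $f\equiv -1$; that degenerate case forces $g\equiv 0$ and corresponds to $\beta=\infty$. A direct algebraic substitution then yields $f=(1-\beta^2)/(1+\beta^2)$ and $g=2\beta/(1+\beta^2)$. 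No analytic input is needed here.

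For (b), (d) and (e), the cleanest route is to restrict to complex lines. Suppose $f,g$ are meromorphic on $\mathbb{C}^n$ and $f$ is nonconstant. For a generic point $a$ and direction $b$, the restrictions $F(t)=f(a+tb)$ and $G(t)=g(a+tb)$ are meromorphic in $t\in\mathbb{C}$, not identically $\infty$, and $F$ is nonconstant. They satisfy $F^m+G^m=1$, so the classical one-variable results (\cite{FG2}, \cite{M1}) apply: there are no nonconstant entire solutions for $m\ge 3$ and no nonconstant meromorphic solutions for $m\ge 4$. Since $F$ is nonconstant for generic lines when $f$ is, this is a contradiction, which proves (b) and (e). Alternatively, (b) follows directly from a several-variable Cartan/Borel-type lemma, or from Green's theorem on holomorphic curves in $\mathbb{P}^2$ avoiding $m$ lines, via the factorization $\prod_{k}(f-\zeta_k g)=1$ with $\zeta_k^m=-1$. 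Each factor $f-\zeta_k g$ is zero-free, and for $m\ge3$ the linear dependencies among three such factors force them to be constant.

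For (d), with $m=3$, the meromorphic map $(f,g)$ lands in the Fermat cubic, which is an elliptic curve $E$, i.e.\ a torus $\mathbb{C}/\Lambda$. The uniformization is explicit: the map $u\mapsto \bigl(\tfrac{1}{2\wp(u)}(1+\tfrac{\wp'(u)}{\sqrt3}),\ \tfrac{1}{2\wp(u)}(1-\tfrac{\wp'(u)}{\sqrt3})\bigr)$, with $\wp$ the Weierstrass function satisfying $\wp'^2=4\wp^3-1$, parametrizes $E$. A meromorphic map $\mathbb{C}^n\to E$ is in fact holomorphic, because $E$ is a curve and the indeterminacy set has codimension at least two. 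Since $\mathbb{C}^n$ is simply connected, this map lifts through the universal cover $\mathbb{C}\to E$ to an entire $h$, which gives the stated form.

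The main obstacle I expect is this lifting step in (d): one must justify that the holomorphic map extends across the codimension-two indeterminacy set (Hartogs/Levi extension), and then lift it. A secondary issue is choosing the constants in the $\wp$-normalization so that the parametrization actually satisfies $f^3+g^3=1$.
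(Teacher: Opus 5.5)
The paper gives no proof of Theorem A. It is quoted from \cite{GS2} as background and is not used in the proof of Theorem~\ref{t1}, so your proposal has to be judged on its own.

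Parts (a), (c), (b) and (e) are correct as you have them. A holomorphic logarithm of the zero-free entire function $f+\iota g$ gives (a). With $\beta=g/(1+f)$ one gets $1+\beta^2=2/(1+f)$, which yields both formulas in (c); you are right that the constant solution $(-1,0)$ is not of the stated form, which is a defect of the quoted statement. Restricting to a generic complex line, where $f$ stays nonconstant and the identity survives after clearing denominators, reduces (b) and (e) to the one-variable theorems. Your Borel-type alternative for (b) also works once you note that three of the zero-free factors $f-\zeta_k g$ satisfy a linear relation with all coefficients nonzero. Picard then makes their ratios constant, and $\prod_k(f-\zeta_k g)=1$ makes each factor constant.

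The step that is wrongly justified is the holomorphy claim in (d). A meromorphic map from $\mathbb{C}^n$ into a curve is not holomorphic merely because the target is a curve and the indeterminacy set has codimension at least two. For example, $z\mapsto[z_1:z_2]$ is a meromorphic map $\mathbb{C}^2\to\mathbb{P}^1$ with a genuine indeterminacy point at the origin. For the same reason, extending first by Hartogs and lifting afterwards is the wrong order: the Riemann--Hartogs theorem extends scalar functions, not maps into an arbitrary compact curve. What you need is that $E$ has genus one, so its universal cover is $\mathbb{C}$, and you must lift first.

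Write $\Phi=[p_1:p_2:q]$ with entire $p_1,p_2,q$ having no common divisor. Then $\Phi$ is holomorphic on $U=\mathbb{C}^n\setminus I$, where $I$ is analytic of codimension at least two, and $\Phi(U)\subset E$. Since $I$ has real codimension at least four, $U$ is still simply connected. So $\Phi|_U$ lifts through $\pi:\mathbb{C}\to E$ to a holomorphic $\tilde h:U\to\mathbb{C}$. The Riemann--Hartogs extension theorem extends $\tilde h$ to an entire $h$, and $\pi\circ h$ is the required holomorphic extension of $\Phi$.

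Your normalization worry is settled by direct computation. With $\wp'^2=4\wp^3-1$, $P=\wp$ and $Q=\wp'/\sqrt3$, one gets $f^3+g^3=(1+3Q^2)/(4P^3)=1$. The inverse $\wp=1/(f+g)$, $\wp'/\sqrt3=(f-g)/(f+g)$ is rational. Hence $u\mapsto(f(u),g(u))$ is a birational, and therefore biholomorphic, map from $\mathbb{C}/\Lambda$ onto the projective Fermat cubic, so it really is the covering $\pi$ you lift through.
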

In 2009, Liu \cite{Liu1} proved the following two propositions:
\begin{theoB}[{\cite[Proposition 5.1]{Liu1}}]
	Let \(f\) be a non-constant finite order entire solution of the non-linear
	difference equation
	\[	f(z)^2+f(z+c)^2=a(z)^2,\]then $	f(z)=\frac{1}{2}\left(h_1(z)+h_2(z)\right),$
	where $	\frac{h_1(z+c)}{h_1(z)}=i \quad \text{and} \quad \frac{h_2(z+c)}{h_2(z)}=-i,$
	$h_1(z)h_2(z)=a(z)^2,$	where $a(z)$ is a non-vanishing small function to $f(z)$ with period $c$.
\end{theoB}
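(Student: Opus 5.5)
The plan is to factor the equation over the Gaussian integers and reduce it to a pair of first-order difference equations for the factors. Write
\[
f(z)^2+f(z+c)^2=\bigl(f(z)+\iota f(z+c)\bigr)\bigl(f(z)-\iota f(z+c)\bigr)=a(z)^2 .
\]
Since $f$ is entire and $a$ is non-vanishing, both factors are entire and zero-free. Set $h_1(z)=f(z)+\iota f(z+c)$ and $h_2(z)=f(z)-\iota f(z+c)$. Then $h_1h_2=a^2$ and
\[
f(z)=\tfrac12\bigl(h_1(z)+h_2(z)\bigr),\qquad f(z+c)=\tfrac{1}{2\iota}\bigl(h_1(z)-h_2(z)\bigr).
\]

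Next, shift the first relation by $c$ and compare it with the second. This gives
\[
h_1(z+c)+h_2(z+c)=-\iota\bigl(h_1(z)-h_2(z)\bigr).
\]
Divide by $h_2(z+c)$ and use $h_2=a^2/h_1$ together with $a(z+c)=a(z)$. Writing $\phi=h_1/a$, so that $h_2/a=1/\phi$, the relation becomes a functional equation in $\phi$:
\[
\phi(z+c)+\frac{1}{\phi(z+c)}=-\iota\Bigl(\phi(z)-\frac{1}{\phi(z)}\Bigr).
\]

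The key step is to show that $\phi(z+c)/\phi(z)$ is constant. Put $u=\phi(z)$ and $v=\phi(z+c)$. Clearing denominators, the relation reads $v^2u+u+\iota(u^2-1)v=0$. As a quadratic in $v$, this factors as $(v+\iota u)(uv-\iota)=0$. Hence either $\phi(z+c)=-\iota\phi(z)$ or $\phi(z+c)\phi(z)=\iota$ identically, by the identity theorem on $\mathbb C^n$. In the first case, $h_1(z+c)/h_1(z)=-\iota$, and then $h_2(z+c)/h_2(z)=\iota$ because $h_1h_2=a^2$ is $c$-periodic. Relabeling $h_1$ and $h_2$ gives the stated form.

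The main obstacle is excluding the second alternative, or showing it reduces to the stated form. If $\phi(z+c)\phi(z)=\iota$, then $h_1(z+c)=\iota h_2(z)$. Substituting this into the expression for $f(z+c)$ shows the system is consistent only in degenerate situations. Concretely, I would plug it back into the shifted identity and check whether it forces $h_1\equiv\pm h_2$-type relations. Such relations would make $f$ constant or contradict $h_1h_2=a^2$ with $a\not\equiv0$.

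If the algebra does not close immediately, the fallback is to use finite order. Write $\phi=e^{p}$ with $p$ a polynomial, which is possible since $\phi$ is zero-free, entire and of finite order; strictly, $\phi$ is a quotient involving $a$, so this holds modulo the small function $a$. Then $p(z+c)+p(z)$ would be constant. Comparing degrees forces $p$ to be constant, so $f$ would be constant, which excludes the second alternative for non-constant $f$.
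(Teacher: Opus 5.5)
Your reduction is correct up to the dichotomy. With $u=\phi(z)$ and $v=\phi(z+c)$ one does get $(v+\iota u)(uv-\iota)\equiv 0$, and the branch $\phi(z+c)=-\iota\phi(z)$ gives the stated form after swapping $h_1$ and $h_2$. The gap is in excluding the branch $\phi(z)\phi(z+c)=\iota$.

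Your first suggestion cannot work, because this branch satisfies the shifted identity identically. If $v=\iota/u$, then $v+1/v=\iota/u-\iota u=-\iota(u-1/u)$, so substituting back yields no relation at all. Your fallback has the right first half: finite order gives $\phi=e^{p}$ with $p(z)+p(z+c)$ constant, hence $p$ constant. But the conclusion ``so $f$ would be constant'' is false. A constant $p$ gives $\phi^2=\iota$, i.e.\ $h_1=\iota h_2$, and
\[
f=\tfrac{a}{2}\bigl(\phi+\phi^{-1}\bigr)=\pm\tfrac{a}{\sqrt2},
\]
which is non-constant whenever $a$ is.

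This branch genuinely occurs. Take a linear form $L$ with $L(c)=2\pi\iota$ and set $a=e^{L}$. Then $f=a/\sqrt2$ is a non-constant finite-order entire solution. The only pair with $h_1+h_2=2f$ and $h_1h_2=a^2$ is $\{e^{\iota\pi/4}a,\,e^{-\iota\pi/4}a\}$, and both are $c$-periodic, so the conclusion fails for it.

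What rules this out is the hypothesis that $a$ is a small function of $f$, which your proposal never invokes. From $f=\pm a/\sqrt2$ you get $T(r,a)=T(r,f)+O(1)$, contradicting $T(r,a)=o(T(r,f))$ since $T(r,f)\to\infty$. If $a$ is constant, then $f$ is constant, contradicting the hypothesis.

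Also, no hedging ``modulo $a$'' is needed for $\phi=e^{p}$. The function $a^2=f(z)^2+f(z+c)^2$ is entire of finite order, so $a$ is a zero-free entire function of finite order. Hence $\phi=h_1/a$ is zero-free, entire and of finite order, and Lemma~\ref{L.5} with Remark~\ref{R1} makes $p$ a polynomial exactly.

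With these two repairs (finite order to force $\phi$ constant, then smallness of $a$ to exclude $f=\pm a/\sqrt2$) your argument is complete. The paper itself only quotes Theorem B from Liu and gives no proof to compare against.
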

\begin{theoC}[{\cite[Proposition 5.3]{Liu1}}]
	There is no non-constant finite order entire solution of the non-linear
	difference equation
	\[f(z)^2+\left(\Delta_c f\right)^2=a^2,\] where $a$ is a non-zero constant.
\end{theoC}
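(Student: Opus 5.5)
Since $a$ is a nonzero constant and $\Delta_c f(z)=f(z+c)-f(z)$, I would rewrite the equation as
\[
\bigl(f(z)+i\Delta_c f(z)\bigr)\bigl(f(z)-i\Delta_c f(z)\bigr)=a^2 .
\]
Each factor is entire, and their product is a nonzero constant, so neither factor has zeros. Suppose $f$ is a non-constant entire solution of finite order. Then $\Delta_c f$ also has finite order, so each factor is a zero-free entire function of finite order. By the Hadamard-type factorization (which I would take as known), there is a polynomial $p$ such that
\[
f+i\Delta_c f=a\,e^{p},\qquad f-i\Delta_c f=a\,e^{-p}.
\]
Solving this linear system gives
\[
f(z)=a\cosh p(z)=\tfrac{a}{2}\bigl(e^{p(z)}+e^{-p(z)}\bigr),\qquad \Delta_c f(z)=\tfrac{a}{2i}\bigl(e^{p(z)}-e^{-p(z)}\bigr).
\]
Since $f$ is non-constant, $p$ is non-constant.

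Next I would substitute the formula for $f$ into the definition of $\Delta_c f$. This yields the identity
\[
\tfrac12\bigl(e^{p(z+c)}+e^{-p(z+c)}\bigr)-\tfrac12\bigl(e^{p(z)}+e^{-p(z)}\bigr)=\tfrac1{2i}\bigl(e^{p(z)}-e^{-p(z)}\bigr).
\]
Rearranging, it becomes
\[
e^{p(z+c)}+e^{-p(z+c)}=(1-i)\,e^{p(z)}+(1+i)\,e^{-p(z)}.
\]
Now I would apply Borel's lemma, i.e.\ the linear independence of exponentials of polynomials whose pairwise differences are non-constant. The exponents $p(z+c)$, $-p(z+c)$, $p(z)$ and $-p(z)$ cannot all differ pairwise by non-constants, because otherwise every coefficient would have to vanish, which is false. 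Since $p$ is non-constant, $p(z+c)+p(z)$ is non-constant, so $p(z+c)$ can only be paired with $p(z)$ up to a constant. So I would split into cases according to which exponents differ by constants.

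\textbf{Case A: $p(z+c)-p(z)\equiv \kappa$ is constant.} Comparing coefficients of $e^{p}$ and of $e^{-p}$ gives $e^{\kappa}=1-i$ and $e^{-\kappa}=1+i$. Multiplying these yields $1=(1-i)(1+i)=2$, a contradiction.

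\textbf{Other pairings.} These would require $p(z+c)+p(z)$ to be constant, which is impossible for a non-constant polynomial by comparing degrees (the leading terms double).

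The only genuine obstacle is justifying the Borel-type lemma for linear combinations of exponentials of polynomials. I would take it as standard, or prove it from Nevanlinna's second main theorem in the usual one-variable way. The conclusion is that no non-constant finite order entire solution exists.
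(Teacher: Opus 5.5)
The paper does not prove this statement: Theorem C is quoted from Liu's paper without proof, so there is no internal argument to match. Judged on its own, your proof is correct and complete. It is also essentially the mechanism the paper uses in Case 1 of the proof of Theorem \ref{t1}:
\begin{enumerate}
\item factor the left-hand side into two zero-free entire functions;
\item write them as $e^{\pm(\cdot)}$ of a polynomial (the paper uses Lemma \ref{ARK}, Remark \ref{R1} and Lemma \ref{L.5}; you use Hadamard);
\item solve the linear system to get a cosine/cosh representation;
\item substitute back through the shift and do a case analysis on which exponents can differ by constants.
\end{enumerate}

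The real difference is the tool in the last step. The paper applies the Nevanlinna-theoretic Lemma \ref{L.6} to identities normalized as a sum equal to $1$. That lemma is more general (it allows meromorphic terms with few zeros and poles), but that generality is not needed here. You instead apply Borel's theorem on linear independence of exponentials with constant coefficients, grouping the exponents into classes modulo constants. Since every term is a constant times the exponential of a polynomial, this is cleaner. It also makes the case analysis sharper than the paper's either/or deduction: because $2p(z)$, $2p(z+c)$ and $p(z+c)+p(z)$ are all non-constant, the only possible grouping is $\{p(z+c),p(z)\}$ and $\{-p(z+c),-p(z)\}$. You should state explicitly that $p(z+c)$ cannot pair with $-p(z+c)$, nor $p$ with $-p$; this is immediate but is exactly what justifies the coefficient comparison $e^{\kappa}=1-i$, $e^{-\kappa}=1+i$, which gives $1=2$.

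Two further points in your favour. Your argument needs no assumption $c\neq 0$, since that case falls into Case A with $\kappa=0$. It also never uses $\deg p=1$ in Case A, so it carries over verbatim to $\mathbb{C}^n$: there $p(z+c)-p(z)$ can be constant for higher-degree $p$, the zero-free factors are exponentials of polynomials by Remark \ref{R1} and Lemma \ref{L.5}, and $p(z+c)+p(z)$ has top homogeneous part twice that of $p$.
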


In 2012, Liu-Cao-Cao \cite{LCC1} considered the following difference equation and obtained:
\begin{theoD}[{\cite[Theorem 1.1]{LCC1}}]
	The transcendental entire solutions of finite order of
	\[
	f(z)^2+f(z+c)^2=1
	\]
	must satisfy
	$f(z)=\sin(Az+B),$
	where \(B\) is a constant and $A=\frac{(4k+1)\pi}{2c},$	with \(k\in\mathbb{Z}\).
\end{theoD}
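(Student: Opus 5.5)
The plan is to reduce Theorem D to the structure of Theorem A(a) in one variable. Write the equation as
\[
\bigl(f(z)+\iota f(z+c)\bigr)\bigl(f(z)-\iota f(z+c)\bigr)=1 .
\]
Both factors are entire and have no zeros. Since $f$ has finite order, each factor has finite order as well, because $f(z+c)$ has the same order as $f$. By Hadamard's factorization theorem we may then write $f(z)+\iota f(z+c)=e^{p(z)}$ and $f(z)-\iota f(z+c)=e^{-p(z)}$, where $p$ is a polynomial. Solving this linear system gives
\[
f(z)=\frac{e^{p(z)}+e^{-p(z)}}{2},\qquad f(z+c)=\frac{e^{p(z)}-e^{-p(z)}}{2\iota}.
\]
Since $f$ is transcendental, $p$ is non-constant.

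Next, we shift the first formula by $c$ and equate it with the second. This yields
\[
\iota\bigl(e^{p(z+c)}+e^{-p(z+c)}\bigr)=e^{p(z)}-e^{-p(z)}.
\]
Multiplying through by $e^{p(z)}$ gives an identity among the four exponentials $e^{p(z+c)+p(z)}$, $e^{p(z)-p(z+c)}$, $e^{2p(z)}$ and the constant $1$. The key step is to apply Borel's theorem on linear combinations of exponentials of polynomials, or equivalently a growth comparison. If $\deg p\ge 2$, then $p(z+c)-p(z)$ is non-constant. In that case $e^{2p}$, $e^{p(z)+p(z+c)}$ and $e^{p(z)-p(z+c)}$ have pairwise non-constant exponent differences, and none of them has a constant exponent. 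Borel's theorem then forces every coefficient to vanish, which is a contradiction. Hence $p(z)=az+b$ with $a\neq 0$.

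With $p$ linear, we compare coefficients of the independent exponentials $e^{az}$ and $e^{-az}$. The identity becomes
\[
\iota e^{ac}=1 \quad\text{and}\quad \iota e^{-ac}=-1,
\]
that is, $e^{ac}=-\iota$. These two conditions are consistent. They give $ac=-\tfrac{\pi\iota}{2}+2k\pi\iota$ for some $k\in\mathbb Z$. Then $f(z)=\cosh(az+b)=\cos(\iota a z+\iota b)$. Setting $A=\iota a$ and absorbing a shift of $\pi/2$ into the constant converts the cosine into $\sin(Az+B)$. The relation $Ac=\iota ac=\tfrac{\pi}{2}-2k\pi$ then becomes, after reindexing $k$, exactly $A=\tfrac{(4k+1)\pi}{2c}$.

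Finally, we verify directly that every such function solves the equation. Indeed $\sin(Az+Ac+B)=\cos(Az+B)$ when $Ac\equiv\pi/2 \pmod{2\pi}$, so $f(z)^2+f(z+c)^2=\sin^2+\cos^2=1$. The main obstacle is the degree reduction in the second step: we must check carefully that the Borel-type lemma applies, meaning that no two exponents differ by a constant when $\deg p\ge2$. A growth (order/type) comparison of the terms should settle this.
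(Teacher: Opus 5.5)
Your proof is correct. The paper only cites Theorem D from Liu--Cao--Cao without proving it, but your argument is the same one the paper runs in Case 1 of the proof of Theorem \ref{t1}: write the zero-free factors as $f(z)\pm\iota f(z+c)=e^{\pm p}$ with $p$ a polynomial by finite order, then use a Borel-type lemma to force $p(z+c)-p(z)$ to be constant. The only differences are that you use the classical Borel theorem plus a coefficient comparison where the paper uses the Nevanlinna-theoretic Lemma \ref{L.6}, and that your count $\deg\bigl(p(z+c)-p(z)\bigr)=\deg p-1$ tacitly needs $c\neq 0$, which the statement presupposes.
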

Now, recall that the pair $(f(z), g(z))$ is referred to as a set of finite-order transcendental entire solutions for the system
\[\left\{\begin{array}{l}
	f^{n_1}(z)+ g^{m_1}(z)=1 \\
	f^{n_2}(z)+ g^{m_2}(z)=1
\end{array}\right.\]
if $f(z)$ and $g(z)$ are transcendental entire functions and $\rho=\max \{\rho(f), \rho(g)\}<\infty$.
\par Recently Xu et al. \cite{XLL1} studied the existence and the forms of the finite order transcendental entire solutions to a system of Fermat-type difference equations and obtained the following result:
\begin{theoE}[{\cite[Theorem 1.2]{XLL1}}]
	Let \(c=(c_1,c_2)\) be a constant in \(\mathbb{C}^2\). Then any pair of
	transcendental entire solutions with finite order for the system of
	Fermat-type difference equations
	\bea\label{ee}\left\{\begin{aligned}
			f_1(z_1,z_2)^2+f_2(z_1+c_1,z_2+c_2)^2 &=1,\\
			f_2(z_1,z_2)^2+f_1(z_1+c_1,z_2+c_2)^2 &=1,
		\end{aligned}
		\right.\eea
	have the following forms
	\[(f_1(z),f_2(z))=	\left(\frac{e^{L(z)+B_1}+e^{-(L(z)+B_1)}}{2},
	\frac{A_{21}e^{L(z)+B_1}+A_{22}e^{-(L(z)+B_1)}}{2}\right),\]
	where $L(z)=a_1z_1+a_2z_2,$	$B_1$ is a constant in $\mathbb{C}$, and $c$, $A_{21}$, $A_{22}$
	satisfy one of the following cases:
	
	\begin{enumerate}
		\item[{\em(i)}]
		$L(c)=2k\pi i,\; A_{21}=-i,\; A_{22}=i,$
		or
		$L(c)=(2k+1)\pi i,\; A_{21}=i, \;A_{22}=-i,$ where \(k\in\mathbb{Z}\).
		
		\item[(ii)]
		$L(c)=\left(2k+\frac12\right)\pi i,\;A_{21}=-1,\;A_{22}=-1,$
		or
		$L(c)=\left(2k-\frac12\right)\pi i,\;A_{21}=1,\;A_{22}=1.$
	\end{enumerate}
\end{theoE}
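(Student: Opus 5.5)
The plan is to reduce the system in Theorem E to a pair of exponential identities and then read off the constraints by comparing coefficients.

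\emph{Factorization.} Write the first equation as
\[
\bigl(f_1(z)+if_2(z+c)\bigr)\bigl(f_1(z)-if_2(z+c)\bigr)=1 .
\]
Both factors are entire and nonvanishing. Since $f_1,f_2$ have finite order, a Hadamard-type argument in $\mathbb C^2$ gives
\[
f_1(z)+if_2(z+c)=e^{p(z)}, \qquad f_1(z)-if_2(z+c)=e^{-p(z)},
\]
for a polynomial $p$. The standard fact used here is that a nonvanishing finite-order entire function on $\mathbb C^n$ is $e^{P}$ with $P$ a polynomial. In the same way the second equation gives
\[
f_2(z)+if_1(z+c)=e^{q(z)}, \qquad f_2(z)-if_1(z+c)=e^{-q(z)}.
\]
Solving these,
\[
f_1(z)=\tfrac12\bigl(e^{p}+e^{-p}\bigr), \qquad f_2(z+c)=\tfrac{1}{2i}\bigl(e^{p}-e^{-p}\bigr),
\]
\[
f_2(z)=\tfrac12\bigl(e^{q}+e^{-q}\bigr), \qquad f_1(z+c)=\tfrac{1}{2i}\bigl(e^{q}-e^{-q}\bigr).
\]

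\emph{Shifted identities.} Shifting the expressions for $f_1$ and $f_2$ by $c$ and comparing with the above yields
\[
e^{p(z+c)}+e^{-p(z+c)}=-i\bigl(e^{q(z)}-e^{-q(z)}\bigr),
\]
\[
e^{q(z+c)}+e^{-q(z+c)}=-i\bigl(e^{p(z)}-e^{-p(z)}\bigr).
\]
Divide the first identity by $-ie^{q(z)}$ and rearrange to get
\[
ie^{p(z+c)-q(z)}+ie^{-p(z+c)-q(z)}+e^{-2q(z)}\equiv 1 .
\]
Apply the Borel-type lemma in several variables: if $\sum_{j} e^{g_j}\equiv 1$ with entire $g_j$, then some $e^{g_j}$ is constant. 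Since $f_1,f_2$ are transcendental, $q$ is nonconstant, so $e^{-2q}$ is not constant. Also $p(z+c)-q(z)$ and $-p(z+c)-q(z)$ cannot both be constant. Hence exactly one of them is constant, i.e. $p(z+c)=\pm q(z)+\text{const}$. Substituting back, the identity collapses to one with two remaining exponentials. A second application of the Borel argument forces $\deg p=1$, because $p(z+c)-p(z)$ must be constant. So $p=L(z)+B_1$ with $L$ linear, and $q=\pm\bigl(L(z)+B_1\bigr)+\text{const}$.

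\emph{Coefficient matching.} Insert these linear forms into both shifted identities and equate the coefficients of $e^{\pm L}$. This determines $e^{L(c)}$ together with the additive constant in $q$, and the sign choice splits into the two families of the theorem:
\begin{itemize}
\item $q=p+\text{const}$ leads to $e^{L(c)}=\pm i$, which is case (ii), with $A_{21}=A_{22}=\mp1$;
\item $q=-p+\text{const}$ leads to $e^{2L(c)}=1$, which is case (i), with $A_{21}=-A_{22}=\mp i$.
\end{itemize}
In each case $f_2$ is then written as $\tfrac12\bigl(A_{21}e^{L+B_1}+A_{22}e^{-(L+B_1)}\bigr)$. The last step is to check directly that each listed pair satisfies the system.

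\emph{Main obstacle.} The delicate step is the Borel-lemma case analysis: ruling out degenerate coincidences among the exponents when $p$ has degree greater than one, and fixing the correct additive constants. First I would compare the leading homogeneous parts of $p(z+c)$, $q(z)$ and $p(z)$. Then I would use the symmetric second identity to close the remaining case.
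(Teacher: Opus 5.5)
\textbf{The gap is in the degree step.} The step that fails is the claim that a second application of the Borel argument ``forces $\deg p=1$, because $p(z+c)-p(z)$ must be constant.''

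Your Borel analysis does legitimately give $p(z+c)=\pm q(z)+\kappa$ and $q(z+c)=\pm p(z)+\kappa'$. After discarding mixed signs (which would make $p(z+2c)+p(z)$ constant), you get that $p(z+2c)-p(z)$, and hence $p(z+c)-p(z)$, is constant. In one variable this forces degree one. In $\mathbb{C}^2$ it only says that $p$ is affine along every line parallel to $c$.

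For $c\neq 0$ and $v(z)=c_2z_1-c_1z_2$, the general such polynomial is $p(z)=L(z)+B_1+\psi\bigl(v(z)\bigr)$, with $L$ linear and $\psi$ an arbitrary one-variable polynomial. For $c=0$ there is no constraint at all. Since $\psi(v)$ is itself invariant under $z\mapsto z+c$, and for $\deg\psi\ge 2$ it carries the leading homogeneous part of $p$, the comparison of leading homogeneous parts you propose in your last paragraph detects nothing. The symmetric second identity is satisfied equally well.

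\textbf{The gap cannot be repaired, because Theorem E is false as stated.} The paper does not prove Theorem E; it quotes it from Xu et al.\ and then shows in Example 1.1 that the listed forms are incomplete. That example takes $f_1=f_2=\frac12\bigl(e^{P}+e^{-P}\bigr)$ with $P=z_1+z_2+z_2^2$. With $c=(\pi i/2,0)$ this pair solves the system. The paper prints $(\pi/2,0)$, but an imaginary shift is needed so that $f_j(z+c)=\frac{i}{2}\bigl(e^{P}-e^{-P}\bigr)$, after which both equations reduce to $\cosh^2P-\sinh^2P=1$. Yet $\cosh P\neq\cosh(L+B_1)$ for any linear $L$ and constant $B_1$, since $\cosh a-\cosh b=2\sinh\frac{a+b}{2}\sinh\frac{a-b}{2}$ would force $P\mp(L+B_1)$ to be constant.

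\textbf{What your argument does prove.} It establishes the corrected statement, in line with Case 1 of the paper's Theorem~\ref{t1}: $p=L+Q$ with $Q$ a polynomial satisfying $Q(z+c)=Q(z)$ (equivalently $Q(z+2c)=Q(z)$). Your coefficient matching for $e^{L(c)}$, $A_{21}$, $A_{22}$ then goes through verbatim with $L+Q$ in place of $L+B_1$. Only the constant $p(z+c)-p(z)=L(c)$ enters it.
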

The following example shows that the forms of the function $(f_1(z),f_2(z))$ is not complete.
\begin{exm}
	Consider \[(f_1(z), f_2(z))=\left(\frac{e^{z_1+z_2+z_2^2}+e^{-(z_1+z_2+z_2^2)}}{2},\frac{e^{z_1+z_2+z_2^2}+e^{-(z_1+z_2+z_2^2)}}{2} \right)\] and $c=\left(\frac{\pi}{2},0\right)$. Then, $(f_1(z),f_2(z))$ satisfy {\em(\ref{ee})} but the solutions are not in the forms mentioned in {\em Theorem E}. Here, $B_1$ as used in {\em Theorem E} is not a constant.
\end{exm}

\begin{figure}[H]
	\centering
	
	\begin{tikzpicture}[
		scale=0.92,
		transform shape,
		node distance=1.5cm,
		every node/.style={
			draw,
			rounded corners,
			align=center,
			font=\scriptsize,
			minimum width=3.2cm,
			minimum height=0.75cm,
			inner sep=2pt
		},
		>=stealth
		]
		
		\node (A) {Classical Fermat\\Equation};
		
		\node (B) [below left=1.4cm and 1.8cm of A]
		{Difference\\Fermat Equation};
		
		\node (C) [below right=1.4cm and 1.8cm of A]
		{Several Variable\\Fermat Equation};
		
		\node (D) [below=1.8cm of A]
		{Coupled Difference\\Systems};
		
		\node (E) [below=1.4cm of D,
		fill=blue!10]
		{\textbf{Present Paper}\\
			General System in $\mathbb{C}^{n}$};
		
		\draw[->,thick] (A)--(B);
		\draw[->,thick] (A)--(C);
		\draw[->,thick] (B)--(D);
		\draw[->,thick] (C)--(D);
		\draw[->,very thick] (D)--(E);
		
	\end{tikzpicture}
	
	\caption{Evolution of Fermat-type functional equations leading to the present investigation.}
	\label{fig:roadmap}
	
\end{figure}
\begin{center}
	\begin{tcolorbox}[
		colback=blue!4,
		colframe=blue!40!black,
		width=0.94\linewidth,
		boxrule=0.6pt,
		arc=2mm,
		left=2mm,
		right=2mm,
		top=1mm,
		bottom=1mm
		]
		\textbf{Main novelty of the present work.}
		Unlike previous investigations that focused primarily on single Fermat-type equations or low-dimensional coupled systems, the present paper establishes a classification theory for finite-order transcendental entire solutions of a coupled Fermat-type difference system in several complex variables. The obtained results reveal new periodicity phenomena, nonexistence criteria, and explicit solution structures depending on the relative sizes of the exponents.
	\end{tcolorbox}
\end{center}

Let us consider the following system of equations:
\bea\label{DE1}
\begin{cases}
	f_1^{n_1}(z_1,z_2,\ldots,z_n)+ f_2^{m_1} \left(z_1 + c_1, z_2 + c_2, \ldots, z_n + c_n \right) = 1,\\
	f_2^{n_2}(z_1,z_2,\ldots,z_n) + f_1^{m_2} \left(z_1 + c_1, z_2 + c_2, \ldots, z_n + c_n \right) = 1
\end{cases}
\eea

The objective of the paper is to characterize all finite-order transcendental entire solutions $((f_1(z),f_2(z))$ of the system (\ref{DE1}) for different combinations of the parameters $n_1$, $n_2$, $m_1$, and $m_2$ by extending the solutions in several complex variable setup.   

\begin{theo}\label{t1} Let $c\in\mathbb{C}^n$. Then any finite order transcendental entire solutions for the system of equations (\ref{DE1}), where $m_1,m_2,n_1,n_2\in\mathbb{N}$ and $n_i+m_i \ge 2$ are characterized as follows:
\begin{enumerate}
\item[\em(1)] if $n_1=m_1$, then $m_1=n_1=m_2=n_2=2$ and
	\[(f_1(z),f_2(z))=\cos (L_1(z)+Q_1(z)),\cos (L_2(z)+Q_2(z))),\]
	where $L_1(z)=A_{11}z_1+A_{11}z_2\ldots+A_{1n}z_n$, $L_2(z)=B_{11}z_1+B_{12}z_2\ldots+B_{1n}z_n$, $Q_i(z_2,\ldots,z_n)$ is a polynomial such that $Q_i(z+2c)=Q_i(z)$ for $i=1,2$, $A_{1i},B_{1i}\in\mathbb{C}$ for $i=1,2,\ldots,n$  one of the following hold:
	\begin{enumerate}
		\item[\em(i)] $e^{2\iota (A_{11}c_1+A_{12}c_2+\ldots+A_{1n}c_n)}=-1\;\;\text{and}\;\;e^{2\iota (B_{11}c_1+B_{12}c_2+\ldots+B_{1n}c_n)}=-1$;
		
		\smallskip
		\item[\em(ii)] $e^{-2\iota (A_{11}c_1+A_{12}c_2+\ldots+A_{1n}c_n)}=1\;\;\text{and}\;\;e^{-2\iota (B_{11}c_1+B_{12}c_2+\ldots+B_{1n}c_n)}=1$;
	
\end{enumerate}

\item[\em(2)] if $n_1>m_1$, then $n_2=m_1=1$, $n_1=m_2 \ge 2$ and\begin{align*}
	(f_1(z), f_2(z))=\left(e^{a_1z_1+\ldots+a_nz_n}h_1(z),\;e^{b_1z_1+\ldots+b_nz_n}h_2(z)\right),
\end{align*}
where $(a_1,\ldots,a_n),(b_1,\ldots,b_n) \in\mathbb{C}^n$ such that $e^{2(a_1c_1+\ldots+a_nzc_n)}=t\;(t^{n_1}=1)$ and $e^{2(b_1c_1+\ldots+b_nzc_n)}=1$,  $h_1$ and  $h_2$ are  $2c$-periodic entire functions in $\mathbb{C}^n$.
 \item[\em(3)]
if $n_1<m_1$, then $n_1=m_2=1$, $m_1=n_2 \ge 2$ and \begin{align*}
	(f_1(z), f_2(z))=\left(e^{\tilde a_1z_1+\ldots+\tilde a_nz_n}\tilde h_1(z),\;e^{\tilde b_1z_1+\ldots+\tilde b_nz_n}\tilde h_2(z)\right),
\end{align*}
where $(\tilde a_1,\ldots,\tilde a_n),(\tilde b_1,\ldots,\tilde b_n) \in\mathbb{C}^n$ such that $e^{2(\tilde a_1c_1+\ldots+\tilde a_nzc_n)}=1$ and $e^{2(\tilde b_1c_1+\ldots+\tilde b_nzc_n)}=t\;(t^{m_1}=1)$,  $\tilde h_1$ and $\tilde h_2$ are  $2c$-periodic entire functions in $\mathbb{C}^n$.
\end{enumerate}
\end{theo}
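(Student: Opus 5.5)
We would begin with Nevanlinna-theoretic growth comparisons between $f_1$ and $f_2$, which force the exponents into a few admissible configurations. After that, each configuration is handled explicitly, using Theorem A or a factorization via exponentials of polynomials.

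\textbf{Step 1: the growth relations.} Write $f_j^c(z)=f_j(z+c)$. From the first equation of (\ref{DE1}) and the Valiron–Mohon'ko type identity $T(r,P(f))=\deg P\cdot T(r,f)+O(1)$ in $\mathbb{C}^n$ we get
\[
n_1T(r,f_1)=m_1T(r,f_2^c)+O(1).
\]
Since $f_2$ has finite order, the several-variable difference lemma gives $T(r,f_2^c)=T(r,f_2)+S(r,f_2)$, so
\[
n_1T(r,f_1)=m_1T(r,f_2)+S(r).
\]
The second equation likewise gives
\[
n_2T(r,f_2)=m_2T(r,f_1)+S(r).
\]
Both $f_1$ and $f_2$ are transcendental, so multiplying the two relations yields $n_1n_2=m_1m_2$.

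\textbf{Step 2: exclude large exponents.} Next we use the second main theorem on the first equation. Every zero of $f_1^{n_1}-1$ is a zero of $f_2^c$ of multiplicity at least $m_1$, and every zero of $1-(f_2^c)^{m_1}$ is a zero of $f_1$ of multiplicity at least $n_1$. The truncated second main theorem for $f_1$ with the $n_1$ roots of unity and $\infty$ therefore gives
\[
(n_1-1)T(r,f_1)\le \tfrac{1}{m_1}\,n_1T(r,f_1)+S(r).
\]
Symmetrically, $(m_1-1)\le \tfrac{m_1}{n_1}$ whenever $m_1\ge 2$. These inequalities leave only $\{n_1,m_1\}$ with $n_1=m_1=2$, or with one exponent equal to $1$. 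The same holds for the second equation. Combined with $n_1n_2=m_1m_2$, this produces exactly three patterns: all exponents equal to $2$; $n_2=m_1=1$ with $n_1=m_2$; and $n_1=m_2=1$ with $m_1=n_2$.

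\textbf{Step 3: the case $n_1=m_1=2$.} The equation $f_1^2+(f_2^c)^2=1$ lets us apply Theorem A(a) after the standard factorization
\[
(f_1+\iota f_2^c)(f_1-\iota f_2^c)=1 .
\]
This gives $f_1+\iota f_2^c=e^{\iota p_1}$ with $p_1$ entire, and similarly $f_2+\iota f_1^c=e^{\iota p_2}$. Finite order and the Hadamard-type theorem in $\mathbb{C}^n$ make $p_1,p_2$ polynomials. Solving for $f_1,f_2$ expresses them as cosines, i.e. $f_1=\cos p$ with $f_2^c=\sin p$, and similarly $\tilde p$ for $f_2$. Substituting the shifted relations back gives identities of the form
\[
\cos(p(z+c))=\sin(\tilde p(z))\quad\text{or}\quad \cos(p(z+c))=-\sin(\tilde p(z)).
\]
Such a relation forces $p(z+c)\pm\tilde p(z)$ to be a constant, by Borel's lemma for exponential sums. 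Iterating twice gives $p(z+2c)-p(z)\equiv$ const. Hence $p=L_1+Q_1$ with $L_1$ linear and $Q_1$ $2c$-periodic (in particular constant along $c$). The constants then give the conditions $e^{2\iota L_1(c)}=\pm1$, which are alternatives (i) and (ii).

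\textbf{Step 4: the mixed cases.} In case (2) the equations are
\[
f_1^{n_1}+f_2^c=1,\qquad f_2+(f_1^c)^{n_1}=1.
\]
Eliminating $f_2$ gives the single equation
\[
f_1^{n_1}(z+2c)=f_1^{n_1}(z),
\]
so $f_1(z+2c)=t^{*}f_1(z)$ with $t^{*}$ an $n_1$-th root of unity. Choosing a linear $a\cdot z$ with $e^{2a\cdot c}=t^{*}$ makes $h_1=e^{-a\cdot z}f_1$ $2c$-periodic. Then $f_2=1-(f_1^c)^{n_1}$ is $2c$-periodic, which gives the form with $e^{2b\cdot c}=1$. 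Case (3) is symmetric. The main obstacle is Step 2: making the truncated second main theorem in $\mathbb{C}^n$ exclude configurations such as $n_1=1,\ m_1\ge2$ paired with the other equation, with careful $S(r)$ bookkeeping. The constant-determination in Step 3 via Borel's theorem also needs care.
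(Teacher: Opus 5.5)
Your overall route matches the paper's: Valiron--Mohon'ko plus the difference lemma for $n_1n_2=m_1m_2$, the second main theorem for $1/n_i+1/m_i\ge 1$, exponential factorization with polynomial exponents in the all-$2$ case, and elimination to $2c$-quasi-periodicity in the mixed cases. The only variation is in Step 2. You apply the second main theorem to $f_1$ at the $n_1$-th roots of unity, using that each such point has multiplicity $\ge m_1$. The paper applies it to $h_i=1-f_i^{-n_i}$ at $0,1,\infty$ and bounds $\ol N(r,0;h_i)$ through the growth relation $n_iT(r,f_i)=m_iT(r,f_j(z+c))+\ldots$. Both give the same inequality. Two points, however, do not go through as written.

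First, the claim that Step 2 together with $n_1n_2=m_1m_2$ leaves ``exactly three patterns'' is false under the stated hypothesis $n_i+m_i\ge 2$, because $(n_i,m_i)=(1,1)$ is admissible. The complete list is $(n_1,m_1,n_2,m_2)=(2,2,2,2)$, $(2,2,1,1)$, $(1,1,2,2)$, $(k,1,1,k)$ and $(1,k,k,1)$ with $k\ge 1$.

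The patterns $(2,2,1,1)$ and $(1,1,2,2)$ have $n_1=m_1$, so they must be excluded to reach (1). This can be done. For $(2,2,1,1)$ you get $f_1=\cos p$ and $f_2^c=\sin p$, and the second equation at $z+c$ gives $\sin p(z)+\cos p(z+2c)\equiv 1$. Borel's theorem forbids this: all four exponents $\pm\iota p(z)$, $\pm\iota p(z+2c)$ are non-constant, so the constant $-1$ cannot be cancelled. But you do not do this.

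Worse, $(1,1,1,1)$, which you silently place in the mixed pattern, also has $n_1=m_1$, and there (1) is simply false. Take $c=(c_1,0,\ldots,0)$ with $c_1\neq 0$; then $f_1=e^{\pi\iota z_1/c_1}$, $f_2=1+e^{\pi\iota z_1/c_1}$ is a finite-order transcendental solution. So the statement is only provable under $n_i+m_i\ge 3$. The paper tacitly uses exactly this (``Since $n_i+m_i>2$''), and with it your three patterns (now with $k\ge 2$) are correct and coincide with (1)--(3). You need to make this restriction explicit rather than assert the enumeration.

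Second, in Step 3, ``iterating twice gives $p(z+2c)-p(z)\equiv$ const'' holds only when the same sign occurs in both relations $p_1(z+c)\pm p_2(z)\equiv$ const and $p_2(z+c)\pm p_1(z)\equiv$ const. In the two mixed choices you get $p_1(z+2c)+p_1(z)\equiv$ const (or the analogue for $p_2$). This must be excluded: the top-degree homogeneous part of the left side is twice that of $p_1$, hence non-zero. That is the content of the paper's Sub-cases 1.2--1.3.

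The constants in the two consistent choices also need tracking. These are $p_1(z+c)-p_2(z)\equiv -\pi/2+2k\pi$ with $p_2(z+c)-p_1(z)\equiv -\pi/2+2l\pi$, or $p_1(z+c)+p_2(z)\equiv \pi/2+2k\pi$ with $p_2(z+c)+p_1(z)\equiv \pi/2+2l\pi$. This bookkeeping shows that $L_1$ and $L_2$ fall under the \emph{same} alternative, (i) or (ii). Your sentence ``$e^{2\iota L_1(c)}=\pm 1$'' leaves that open.
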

\setlength{\arrayrulewidth}{1pt}
\begin{table}[H]
	\centering
	\footnotesize
	\caption{Comparison of representative results on Fermat-type equations.}
	\label{tab:literature}
	
	\renewcommand{\arraystretch}{1.35}
	
	\begin{tabular}{|p{2.8cm}|p{1.7cm}|p{3.0cm}|p{5.3cm}|}
		\hline
		\rowcolor{headerblue}
		\color{white}\textbf{Reference}
		&
		\color{white}\textbf{Dimension}
		&
		\color{white}\textbf{\hspace{.5cc}Equation Type}
		&
		\color{white}\textbf{\hspace{3.5cc}Main Result}
		\\
		\hline
		
		\rowcolor{rowblue}
		Gross--Yang
		&
		$\mathbb C$
		&
		$f^m+g^m=1$
		&
		Classification of solutions.
		\\
		\hline
		
		\rowcolor{rowwhite}
		Liu (2009)
		&
		$\mathbb C$
		&
		Difference equations
		&
		Nonexistence and structure results.
		\\
		\hline
		
		\rowcolor{rowblue}
		Liu--Cao--Cao (2012)
		&
		$\mathbb C$
		&
		Difference Fermat equations
		&
		Explicit finite-order solutions for a single equation.
		\\
		\hline
		
		\rowcolor{rowwhite}
		Xu et al. (2020)
		&
		$\mathbb C^2$
		&
		Coupled PDE--difference systems
		&
		Characterization theorem for a special coupled system.
		\\
		\hline
		
		\rowcolor{highlightgreen}
		\textbf{Present paper}
		&
		\textbf{$\mathbb C^n$}
		&
		\textbf{System (\ref{DE1})}
		&
		\textbf{General coupled system with complete classification of finite-order entire solutions.}
		\\
		\hline
	\end{tabular}
\end{table}

\section{\bf{Key lemmas}}
\begin{lem}\label{L.2} \cite[Lemma 1.2]{HY1} Let $f$ be a non-constant meromorphic function in $\mathbb{C}^n$ and let $a_1,a_2,\ldots,a_q$ be different points in $\mathbb{C}\cup \{\infty\}$. Then
	\beas \parallel (q-2)T(r,f)\leq \sideset{}{_{j=1}^{q}}{\sum} \ol N(r,a_j;f)+O(\log (rT(r,f))).\eeas
\end{lem}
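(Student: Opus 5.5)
The statement to prove is Lemma~\ref{L.2}, the several-variable second main theorem for $q$ distinct values. The plan is to deduce it from the standard logarithmic-derivative lemma in $\mathbb{C}^n$ and the Nevanlinna--Cartan machinery. No new Nevanlinna theory is built; the lemma is cited from \cite{HY1}, and the plan records how that argument goes.

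\textbf{Reduction.} First, by a Möbius transformation $\phi$, reduce to the case where all $a_j$ are finite. The characteristics of $f$ and $\phi\circ f$ differ by $O(1)$, and the sets of $a$-points correspond. Next, pick a direction $\xi\in\mathbb{C}^n$ such that the directional derivative $D_\xi f=\sum_k \xi_k\,\partial f/\partial z_k$ is not identically zero. Such a $\xi$ exists because $f$ is non-constant.

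\textbf{Proximity estimate.} Following the one-variable proof, write
\[
\sum_{j=1}^q m(r,a_j;f)\le m\Big(r,\sum_j \frac{1}{f-a_j}\Big)+O(1).
\]
This holds because at each point of $\mathbb{C}^n\langle r\rangle$ at most one $|f-a_j|$ is small. Factor out $D_\xi f$ to get
\[
m\Big(r,\sum_j\frac{1}{f-a_j}\Big)\le m\Big(r,\frac{1}{D_\xi f}\Big)+\sum_j m\Big(r,\frac{D_\xi f}{f-a_j}\Big).
\]
The second sum is controlled by the logarithmic derivative lemma in $\mathbb{C}^n$ (Vitter / Biancofiore--Stoll). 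That lemma gives $m(r,D_\xi g/g)=O(\log(rT(r,g)))$ outside a set of finite measure, which is the source of the symbol $\parallel$.

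\textbf{Counting and assembly.} Apply the first main theorem (as recalled above) to $1/D_\xi f$ and use Jensen's formula on $\mathbb{C}^n\langle r\rangle$. This yields
\[
m(r,1/D_\xi f)\le T(r,D_\xi f)-N(r,0;D_\xi f)+O(1).
\]
Also $T(r,D_\xi f)\le 2T(r,f)-N(r,\infty;f)+\overline N(r,\infty;f)+O(\log(rT(r,f)))$, with the usual ramification bookkeeping. Combining these gives
\[
(q-1)T(r,f)\le \overline N(r,\infty;f)+\sum_j N(r,a_j;f)-N_{\mathrm{ram}}(r)+O(\log(rT(r,f))).
\]
The term $N_{\mathrm{ram}}$ counts zeros of $D_\xi f$ away from poles, and it absorbs the multiplicities, so that $N(r,a_j;f)$ becomes $\overline N(r,a_j;f)$.

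\textbf{Main obstacle.} The key step is the multiplicity comparison. An $a_j$-point of $f$ of multiplicity $p$ in the Stoll divisor sense must be a zero of $D_\xi f$ of multiplicity at least $p-1$. For generic $\xi$ this holds along each irreducible component of the divisor, since the order of vanishing drops by exactly one there. Outside an analytic set of codimension $\ge 2$, which is invisible to the counting functions, it holds pointwise. Verifying this divisor inequality, together with the integrability on spheres used in Jensen's formula, is the technical core. For $q$ values including $\infty$, the $\overline N(r,\infty;f)$ term is what produces the final $(q-2)T(r,f)$.
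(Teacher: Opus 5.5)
The paper gives no proof of Lemma~\ref{L.2}. It is quoted from \cite[Lemma 1.2]{HY1} and used as a black box in (\ref{lm.6}). Your outline is the standard derivation of the second main theorem in $\mathbb{C}^n$: M\"obius normalisation, the directional derivative $D_\xi f$, the logarithmic derivative lemma, the first main theorem for $1/D_\xi f$, and ramification. It is sound. Compared with the bare citation, it shows that only two inputs are genuinely several-variable. The first is the Vitter/Biancofiore--Stoll logarithmic derivative lemma, which is the source of both $\parallel$ and the error term $O(\log(rT(r,f)))$. The second is the fact that analytic sets of codimension $\ge 2$ (singular loci of divisors, the indeterminacy set $I_f$) are invisible to counting functions.

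Three bookkeeping points should be fixed.

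\textbf{The bound on $T(r,D_\xi f)$.} Your bound $T(r,D_\xi f)\le 2T(r,f)-N(r,\infty;f)+\overline N(r,\infty;f)+\cdots$ is true, but it equals $T(r,f)+m(r,f)+\overline N(r,\infty;f)+\cdots$. So it does not give your displayed $(q-1)$-inequality. Instead, use $m(r,D_\xi f)\le m(r,f)+m(r,D_\xi f/f)$ and $N(r,\infty;D_\xi f)\le N(r,\infty;f)+\overline N(r,\infty;f)$. These give $T(r,D_\xi f)\le T(r,f)+\overline N(r,\infty;f)+O(\log(rT(r,f)))$. Your weaker bound would in fact already give $(q-2)T(r,f)\le\sum_j N(r,a_j;f)-N(r,0;D_\xi f)+O(\log(rT(r,f)))$, which suffices after the M\"obius reduction.

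\textbf{The multiplicity step.} No genericity is needed. At a regular point of a component lying outside $I_f$, one has $f-a_j=u\,w^p$ with $u$ a unit. Hence $D_\xi f=w^{p-1}(p\,u\,D_\xi w+w\,D_\xi u)$ vanishes to order $\ge p-1$ for every $\xi$ with $D_\xi f\not\equiv 0$. What you should state explicitly is that the supports of $\mu^{a_j}_f$ for distinct $j$ intersect only inside $I_f$. Only then can these ramification inequalities be summed over $j$.

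\textbf{The passage to $q-2$.} After the M\"obius reduction no $a_j$ equals $\infty$. The step from $q-1$ to $q-2$ is then just $\overline N(r,\infty;f)\le T(r,f)+O(1)$. Your closing sentence mixes the two normalisations.
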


\begin{lem}\label{L.3} \cite[Theorem 1.26]{HLY1} Let $f$ be non-constant meromorphic function in $\mathbb{C}^n$. Assume that
	$R(z, w)=\frac{A(z, w)}{B(z, w)}$. Then
	\beas T\left(r, R_f\right)=\max \{p, q\} T(r, f)+O\Big(\sideset{}{_{j=0}^p}{\sum} T(r, a_j)+\sideset{}{_{j=0}^q}{\sum}T(r, b_j)\Big),\eeas
	where $R_f(z)=R(z, f(z))$ and two coprime polynomials $A(z, w)$ and $B(z,w)$ are given
	respectively as follows:
	\[A(z,w)=\sideset{}{_{j=0}^p}{\sum} a_j(z)w^j\;\;\text{and}\;\;B(z,w)=\sideset{}{_{j=0}^q}{\sum} b_j(z)w^j.\]
\end{lem}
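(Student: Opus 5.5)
The plan is to prove both inequalities at once by working with reduced representations and homogenized polynomials, so that $T(r,R_f)$ is computed by Jensen's formula rather than through the separate proximity and counting functions. Put $d=\max\{p,q\}$ and $S(r)=\sum_{j=0}^p T(r,a_j)+\sum_{j=0}^q T(r,b_j)$. By the Cousin-II property of $\mathbb{C}^n$, $f$ has a reduced representation $f=f_1/f_0$ with $f_0,f_1$ entire and $\dim f_0^{-1}(0)\cap f_1^{-1}(0)\le n-2$. After clearing a common entire denominator of the coefficients, I assume all $a_j,b_j$ are entire. This changes nothing, because the $N$-terms of the poles of the coefficients are bounded by $O(S(r))$ and the coefficient functions change only by factors of characteristic $O(S(r))$. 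Homogenize:
\[
A^*(z;x_0,x_1)=\sum_{j=0}^p a_j(z)x_1^jx_0^{d-j},\qquad B^*(z;x_0,x_1)=\sum_{j=0}^q b_j(z)x_1^jx_0^{d-j},
\]
and set $G_0=B^*(z;f_0,f_1)$, $G_1=A^*(z;f_0,f_1)$. Then $R_f=G_1/G_0$. The pair $(G_0,G_1)$ need not be reduced. Let $h$ be an entire function whose divisor is the common part of the divisors of $G_0$ and $G_1$ (Cousin-II again), so that $(G_0/h,G_1/h)$ is reduced. By Jensen's formula in $\mathbb{C}^n$ and $T_f(r)=T(r,f)+O(1)$,
\[
T(r,R_f)=\int_{\mathbb{C}^n\langle r\rangle}\log\|(G_0,G_1)\|\,\sigma-N(r,0;h)+O(1),\qquad T(r,f)=\int_{\mathbb{C}^n\langle r\rangle}\log\|(f_0,f_1)\|\,\sigma+O(1).
\]

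\emph{Upper bound.} Pointwise, $\|(G_0,G_1)\|\le \big(\sum_j|a_j|+\sum_j|b_j|\big)\,\|(f_0,f_1)\|^d$. Integrating over the sphere and using $\int\log^+|a_j|\,\sigma\le T(r,a_j)$, and then dropping $N(r,0;h)\ge0$, gives $T(r,R_f)\le d\,T(r,f)+O(S(r))$.

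\emph{Lower bound.} Since $A$ and $B$ are coprime in $w$ over the field of meromorphic functions, the resultant $\mathcal{R}(z)$ of $A^*$ and $B^*$, taken as binary forms of degree $d$, is $\not\equiv0$. The exception is when both leading coefficients of degree $d$ vanish, which cannot happen since $d=\max\{p,q\}$. The resultant is a polynomial in the $a_j,b_j$, so $T(r,\mathcal{R})=O(S(r))$. The classical resultant identities give forms $U_i,V_i$ of degree $d-1$ in $(x_0,x_1)$, with coefficients polynomial in the $a_j,b_j$, such that
\[
U_iA^*+V_iB^*=\mathcal{R}\,x_i^{2d-1},\qquad i=0,1.
\]
Evaluating at $(f_0,f_1)$ gives
\[
|\mathcal{R}|\,\|(f_0,f_1)\|^{2d-1}\le C(z)\,\|(G_0,G_1)\|\,\|(f_0,f_1)\|^{d-1},
\]
where $\log^+ C\le O\big(\sum\log^+|a_j|+\sum\log^+|b_j|\big)$. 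Hence
\[
d\log\|(f_0,f_1)\|\le\log\|(G_0,G_1)\|-\log|\mathcal{R}|+\log C .
\]
The same identities show that $h$ divides $\mathcal{R}f_i^{2d-1}$ for $i=0,1$. Since $f_0$ and $f_1$ have no common divisor, $h$ divides $\mathcal{R}$, so $N(r,0;h)\le N(r,0;\mathcal{R})$. Integrating and using Jensen, $\int\log|\mathcal{R}|\,\sigma=N(r,0;\mathcal{R})+O(1)$, we get
\[
d\,T(r,f)\le T(r,R_f)+N(r,0;h)-N(r,0;\mathcal{R})+O(S(r))\le T(r,R_f)+O(S(r)).
\]
Combining the two bounds gives the lemma.

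The main obstacle I expect is the bookkeeping in the lower bound. First, I need the resultant identities with the power $x_i^{2d-1}$ when $p\ne q$, which requires checking that homogenizing to common degree $d$ keeps the resultant nonzero. Second, I need to verify carefully that the cancellation divisor $h$ is controlled by the divisor of $\mathcal{R}$ in the several-variable setting, where divisibility arguments must be done on germs (using that $\mathcal{O}_{\mathbb{C}^n,a}$ is a UFD) rather than pointwise on zeros. Finally, clearing denominators of meromorphic coefficients must be handled so that it costs only $O(S(r))$.
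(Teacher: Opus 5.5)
The paper gives no proof of this lemma; it is quoted from \cite{HLY1}, and in the body of the paper it is only applied with constant coefficients. Your route differs from the classical Valiron--Mohon'ko argument, which splits $T=m+N$ and estimates the proximity and counting functions of $R_f$ separately. You instead use reduced representations, degree-$d$ homogenizations and Jensen's formula, so you never separate $m$ and $N$. The upper bound becomes a pointwise inequality. The lower bound comes from the resultant identities $U_iA^*+V_iB^*=\mathcal R\,x_i^{2d-1}$, and the cancellation divisor is controlled by the germwise divisibility $h\mid\mathcal R$, which holds because $f_0,f_1$ are coprime in the factorial rings $\mathcal O_{\mathbb C^n,a}$. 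Your nonvanishing argument for the resultant is correct. You do not actually need $T(r,\mathcal R)=O(S(r))$, since $N(r,0;h)-N(r,0;\mathcal R)\le 0$ absorbs it. For entire (in particular constant) coefficients the proof is complete.

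The step that fails as written is the reduction to entire coefficients. You multiply all coefficients by an entire $g$ and assert that $g$ has characteristic $O(S(r))$, but nothing in your construction gives this. Jensen's formula controls only $N(r,0;g)$, while $m(r,g)$ is unconstrained: you could replace $g$ by $ge^{P}$. Miles-type quotient-representation results only bound such a $g$ via counting functions at a larger radius. Both of your estimates pass through $\int\log^+|ga_j|\,\sigma\le T(r,ga_j)$, so your error term would contain $T(r,g)$.

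The repair is to exploit homogeneity instead. Choose $g$ with divisor $\sum_j\mu^\infty_{a_j}+\sum_j\mu^\infty_{b_j}$, so that $N(r,0;g)\le S(r)+O(1)$. Let $\mathcal R,U_i,V_i$ and $G_0^o,G_1^o$ be built from the original meromorphic coefficients. Then $G_0,G_1$ equal $g$ times $G_0^o,G_1^o$, the new resultant is $g^{2d}\mathcal R$, and the new $U_i,V_i$ are $g^{2d-1}U_i$ and $g^{2d-1}V_i$. After dividing out the powers of $g$ pointwise, $g$ enters only through $\int\log|g|\,\sigma=N(r,0;g)+O(1)$. The upper bound becomes
\[T(r,R_f)\le d\,T(r,f)+N(r,0;g)+\textstyle\sum_j m(r,a_j)+\sum_j m(r,b_j)+O(1).\]
For the lower bound you get $d\log\|(f_0,f_1)\|\le\log\|(G_0,G_1)\|-\log|g|-\log|\mathcal R|+\log C$, with $C$ built from the original coefficients. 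The divisibility $h\mid g^{2d}\mathcal R$ (now a relation between entire functions) gives $N(r,0;h)\le 2d\,N(r,0;g)+N(r,0;\mathcal R)-N(r,\infty;\mathcal R)$. After Jensen the leftover error is $(2d-1)N(r,0;g)=O(S(r))$. With this change your proof is complete.
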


\begin{lem}\label{L.6}\cite[Lemma 3.2]{HLY1} Let $f_j\not\equiv 0\;(j=1,2,\ldots,n;n\geq 3)$ be meromorphic functions in $\mathbb{C}^n$ such that $f_1,\ldots,f_{n-1}$ are non-constants and $f_1+\cdots+f_n=1$. If
	\beas \parallel \;\;\sideset{}{_{j=1}^n}{\sum}\Big\lbrace N_{n-1}(r,0;f_j)+(n-1)\ol{N}(r,f_j)\Big\rbrace<\lambda T(r,f_j)+O(\log^+(T(r,f_j))\eeas
	holds for $j=1,2,\ldots,n-1$, where $\lambda<1$, then $f_n\equiv 1$.
\end{lem}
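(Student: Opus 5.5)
The plan is to adapt the classical one-variable Borel--Nevanlinna argument (as in Yang--Yi) to $\mathbb{C}^n$. Partial derivatives and a generalized (Fujimoto-type) Wronskian will replace the ordinary derivative and Wronskian. The analytic input is the logarithmic derivative lemma in several variables: for meromorphic $g$ on $\mathbb{C}^n$ and a multi-index $\alpha$, $m\bigl(r,\partial^{\alpha}g/g\bigr)=O(\log^+ T(r,g)+\log r)$ outside an exceptional set, the $\parallel$ in the statement. This is the same ingredient that underlies Lemma \ref{L.2}. The argument has two layers: first the case where $f_1,\ldots,f_n$ are linearly independent over $\mathbb{C}$, which will be shown impossible; then a reduction of the dependent case by induction on the number $n$ of terms (here $n$ is the number of functions, not the dimension).

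\emph{Independent case.} Suppose $f_1,\ldots,f_n$ are linearly independent. By Fujimoto's lemma there are multi-indices $\alpha^0=0,\alpha^1,\ldots,\alpha^{n-1}$ with $|\alpha^k|\le k$ such that the generalized Wronskian $W=\det\bigl(\partial^{\alpha^k}f_j\bigr)$ is not identically zero. Differentiating $f_1+\cdots+f_n=1$ gives the linear system $\sum_j\partial^{\alpha^k}f_j=\delta_{k0}$. Cramer's rule then gives $f_j=W_j/W$, where $W_j$ is $W$ with column $j$ replaced by $(1,0,\ldots,0)^T$. Dividing column $i$ by $f_i$, I write
\[
f_j=\frac{1}{\prod_{i\ne j}f_i}\cdot\frac{D_j}{D}\cdot\prod_{i=1}^n f_i,
\]
where $D=\det\bigl(\partial^{\alpha^k}f_i/f_i\bigr)$ and $D_j$ is its analogue built from $W_j$. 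Simplifying, $f_j=D_j/D$ up to the factor structure coming from the replaced column.

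From this I estimate $T(r,f_j)=T(r,D/D_j)+O(1)$. The proximity part of this characteristic is $O(\log^+\sum_i T(r,f_i)+\log r)$ by the logarithmic derivative lemma. The counting part is bounded by a local multiplicity count at zeros and poles of the $f_i$. At a zero of $f_i$ of multiplicity $p$, the entries $\partial^{\alpha^k}f_i/f_i$ have poles of order at most $k\le n-1$, and a careful local computation of the order of $W$ and $W_j$ there gives a contribution of at most $\min(p,n-1)$. At a pole of $f_i$, the contribution is at most $n-1$ per point, independent of its order. This yields
\[
T(r,f_j)\le\sum_{k=1}^n\Bigl\{N_{n-1}(r,0;f_k)+(n-1)\ol N(r,f_k)\Bigr\}+O\Bigl(\log^+\sum_i T(r,f_i)+\log r\Bigr),
\]
which is the sharpened form standard in one variable. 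The error term must be absorbed as $o(T(r,f_j))$. Since $\sum_i T(r,f_i)\le C\,T(r,f_j)$ for the chosen $j$ (each $f_i$ is a rational expression of the others) and the $f_j$ ($j\le n-1$) are non-constant, this absorption is possible. Comparing with the hypothesis, which bounds the same sum by $\lambda T(r,f_j)$ with $\lambda<1$, gives $(1-\lambda)T(r,f_j)\le o(T(r,f_j))$, a contradiction. One must also exclude the log term; if some $f_j$ is rational one argues directly.

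\emph{Dependent case.} If the $f_i$ are dependent, take a nontrivial relation $\sum a_if_i=0$ and use it to eliminate one function from $f_1+\cdots+f_n=1$. This leaves an identity with fewer terms, $\sum_{i\in I}b_if_i=1$, whose coefficients are constants. Grouping proportional functions and repeating, one arrives at a minimal linearly independent identity $\sum_{i\in I}b_if_i=1$ with $b_i\neq0$. Applying the independent case to $\{b_if_i\}_{i\in I}$ (the counting hypothesis passes to subfamilies, since $N$ and $\ol N$ are unchanged by constant multiples) forces $I$ to consist of a single constant term. Chasing through the relations, every non-constant $f_j$ must then cancel in groups. Tracking which groupings are compatible with $f_1,\ldots,f_{n-1}$ non-constant and $f_n\not\equiv0$ should leave only $f_n\equiv1$: for $n\ge3$, one works on the sub-sum $f_1+\cdots+f_{n-1}=1-f_n$ and uses induction to show $1-f_n$ cannot be a nonzero constant unless the smaller identity applies again.

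\emph{Main obstacle.} The delicate step is the local multiplicity count for the generalized Wronskian in several variables, i.e., the truncation to $N_{n-1}$ and $(n-1)\ol N$ along divisors rather than isolated points. I would handle this by restricting to a generic point of each irreducible component of the zero or pole divisor, where the divisor is locally $\{w=0\}$ in suitable coordinates. At such a point the one-variable order estimate applies to the $w$-direction, with the remaining variables as parameters.
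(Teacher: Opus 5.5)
The paper does not prove this lemma; it is quoted from Hu--Li--Yang. The standard proof (Yang--Yi in one variable, carried over to $\mathbb{C}^m$) has the architecture you describe: generalized Wronskian, logarithmic derivative lemma, then reduction of the dependent case. Your target inequality
\[
T(r,f_j)\le\sum_{k=1}^n\bigl\{N_{n-1}(r,0;f_k)+(n-1)\overline N(r,f_k)\bigr\}+S(r)
\]
is the right one, but the step meant to produce it fails.

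\emph{The gap.} Since $D/D_j=1/f_j$, splitting $T(r,D/D_j)$ into a proximity part that is small by the logarithmic derivative lemma plus a counting part amounts to claiming that $m(r,1/f_j)$ is negligible. That is false for linearly independent families. For $f_1=e^{z_1}$, $f_2=e^{2z_1}$, $f_3=1-e^{z_1}-e^{2z_1}$, symmetry of $\sigma$ under $z\mapsto -z$ gives $m(r,1/f_1)=T(r,f_1)$, while $N(r,0;f_1)=0$. Also, the counting part $N(r,0;f_j)$ never sees the zeros and poles of the other $f_k$ that appear on your right-hand side. The lemma controls only $m(r,D)$ and $m(r,D_j)$. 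If you instead use $T(r,D/D_j)\le T(r,D)+T(r,D_j)+O(1)$ so that both proximity terms are small, the contributions of $f_i$, $i\ne j$, are counted twice. That route gives a contradiction only for $\lambda<1/2$.

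\emph{The missing idea.} Use the lemma asymmetrically, combined with the first main theorem: $m(r,f_j)\le m(r,D_j)+m(r,1/D)$ and $m(r,1/D)=m(r,D)+N(r,D)-N(r,0;D)+O(1)$. With $D=W/\prod_k f_k$ this gives
\[
T(r,f_j)\le\sum_k N(r,0;f_k)+N(r,f_j)+N(r,W)-\sum_k N(r,f_k)-N(r,0;W)+S(r).
\]
Your componentwise local analysis must be run on this \emph{net} divisor, using two expressions for $W$:
\begin{itemize}
\item Away from zero components of $f_j$, use $W=\pm\det(\partial^{\alpha^k}f_i)_{k\ge1,\,i\ne j}$. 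This comes from replacing column $j$ by the sum of all columns, i.e.\ from $W_j=W$.
\item Along zero components of $f_j$, use $W$ itself.
\end{itemize}
Note that $W_j=W$ is what Cramer's rule actually gives for your system, whose unknowns are all equal to $1$; the claim $f_j=W_j/W$ is wrong. The correct formula $f_j=D_j/D$ follows from $W_j=W$ after factoring out $\prod_i f_i$.

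In this net count, a column with a pole of order $q$ lowers the order of $W$ by at most $q+n-1$. These pole orders cancel against $-\sum_kN(r,f_k)$, leaving $(n-1)\overline N$. The truncation $N_{n-1}$ comes from the negative term $-N(r,0;W)$. Along a zero component of $f_k$ of multiplicity $p$, the column of $f_k$ contributes at least $p-(n-1)$ to the order of $W$.

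\emph{Smaller repairs.} To absorb the error term, choose $j=j(r)\le n-1$ with $T(r,f_j)$ maximal and use $T(r,f_n)\le\sum_{k<n}T(r,f_k)+O(1)$. The claim $\sum_iT(r,f_i)\le C\,T(r,f_j)$ is false for a fixed $j$. Your dependent case is sound and needs no induction:
\begin{itemize}
\item Express $1$ in a basis of $\operatorname{span}\{f_i\}$ chosen among the $f_i$, and apply the independent case to the resulting identity. It forces a single term, which must be $f_n$, so $f_n$ is constant.
\item If $f_n\ne1$, do the same for $\sum_{i<n}f_i/(1-f_n)=1$. This produces an independent identity with at least two nonconstant terms, which is impossible.
\end{itemize}
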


\begin{lem}\label{L.5}\cite[Proposition 3.2]{hy1} Let $P$ be a non-constant entire function in $\mathbb{C}^n$. Then
	\[\rho(e^P)=
	\begin{cases}
		\deg(P) & \text{if $P$ is a polynomial,}\\
		+\infty & \text{otherwise}.
	\end{cases}\]
\end{lem}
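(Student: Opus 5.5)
The plan is to use the equivalence of the two standard growth measures for an entire function $f$ on $\mathbb{C}^n$: the characteristic $T(r,f)$ and the maximum modulus $M(r,f)=\max_{\|z\|=r}|f(z)|$. Specifically, for $0<r<R$,
\[
T(r,f)\le \log^+ M(r,f)\le C_n\Big(\frac{R}{R-r}\Big)^{2n-1}\,T(R,f).
\]
The first inequality is immediate, since $N(r,f)=0$ and $\sigma$ is a probability measure on $\mathbb{C}^n\langle r\rangle$. The second follows from the Poisson--Szeg\H{o} representation of the plurisubharmonic function $\log^+|f|$ on the ball of radius $R$. Taking $R=2r$ gives
\[
\rho(f)=\limsup_{r\to\infty}\frac{\log T(r,f)}{\log r}=\limsup_{r\to\infty}\frac{\log\log M(r,f)}{\log r}.
\]
For $f=e^P$ we have $\log M(r,e^P)=A(r):=\max_{\|z\|=r}\operatorname{Re}P(z)$, so everything reduces to the growth of $A(r)$. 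I expect the Poisson-kernel estimate for the second inequality to be the only genuinely $n$-dimensional point. If quoting it is not allowed, I would instead take the order in the $\log\log M$ sense and note that this is the convention equivalent to the $T$-definition.

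\textbf{Polynomial case.} Let $P$ be a polynomial of degree $d\ge1$, and write $P=\sum_{k=0}^{d}P_k$ with $P_k$ homogeneous of degree $k$ and $P_d\not\equiv0$. The trivial bound $|P(z)|\le C(1+\|z\|)^d$ gives $A(r)\le Cr^d$, hence $\rho(e^P)\le d$. For the reverse inequality, choose a unit vector $w$ with $P_d(w)\neq0$ and restrict to the complex line $z=tw$. Then $g(t)=P(tw)=P_d(w)t^d+O(|t|^{d-1})$. Choosing $\arg t$ so that $P_d(w)t^d$ is real and positive gives $A(r)\ge |P_d(w)|r^d-O(r^{d-1})$. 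Therefore $\log\log M(r,e^P)=d\log r+O(1)$, and $\rho(e^P)=d$.

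\textbf{Non-polynomial case.} Assume $P$ is entire and not a polynomial, and suppose for contradiction that $\rho(e^P)=\rho<\infty$. Then $A(r)\le r^{\rho+\varepsilon}$ for large $r$. For each unit vector $w$, the one-variable entire function $g_w(t)=P(tw)$ satisfies $\max_{|t|=r}\operatorname{Re}g_w\le A(r)$. The Borel--Carathéodory inequality then yields
\[
\max_{|t|\le r}|g_w(t)|\le 2A(2r)+3|P(0)|,
\]
and this bound is uniform in $w$. Hence $M(r,P)\le C r^{\rho+\varepsilon}$. Next, the homogeneous expansion $P=\sum_k P_k$ gives $P_k(w)=\frac{1}{2\pi i}\oint_{|t|=r}g_w(t)\,t^{-k-1}\,dt$. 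Cauchy's estimate gives $|P_k(w)|\le C r^{\rho+\varepsilon-k}$, and letting $r\to\infty$ shows $P_k\equiv0$ for all $k>\rho+\varepsilon$. So $P$ is a polynomial, a contradiction, and $\rho(e^P)=+\infty$.
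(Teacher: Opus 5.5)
The paper does not prove this lemma; it quotes it from Hu--Yang (Proposition 3.2 of the reference given in the statement) and uses it once, to pass from $\rho(e^{\iota P_i})<\infty$ to $P_i$ being a polynomial. Your argument is a correct, self-contained proof, so the real comparison is between citing and proving.

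Your route isolates the single genuinely several-variable ingredient, the comparison $T(r,f)\le\log^+M(r,f)\le C_n\,T(2r,f)$ for entire $f$. Your bound is right: the Euclidean Poisson kernel of the ball in $\mathbb{R}^{2n}$, applied to the subharmonic function $\log^+|f|$, gives it with $C_n=2$. The Poisson--Szeg\H{o} kernel you name would instead give the sharper factor $((R+r)/(R-r))^n$. After that comparison, you work entirely with one-variable tools on complex lines $t\mapsto tw$.

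The details hold up:
\begin{itemize}
\item In the polynomial case, a direction $w$ with $P_d(w)\neq0$ exists because a homogeneous polynomial vanishing on the unit sphere vanishes identically.
\item In the non-polynomial case, the Borel--Carath\'eodory bound $\max_{|t|\le r}|g|\le 2\max_{|t|=2r}\operatorname{Re}g+3|g(0)|$ does not need $\max\operatorname{Re}g$ to be positive, and it is uniform in $w$.
\item Every point of the closed ball of radius $r$ lies on such a line with $|t|\le r$, so the bound on $M(r,P)$ follows.
\item $P_k(w)=0$ for all unit $w$ forces $P_k\equiv0$.
\end{itemize}

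A side benefit of your write-up is that it makes explicit that $\rho$ may be read either in the $\log T$ sense or the $\log\log M$ sense. The paper leaves this implicit, since it never defines the order. The citation, for its part, buys brevity.
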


\begin{lem}\label{L.7}\cite[Theorem 2.2]{CX1} Let $f$ be a non-constant meromorphic function in $\mathbb{C}^n$ and let $c\in \mathbb{C}^n$. If
	\bea\label{Ss}\limsup\limits_{r\rightarrow \infty} \frac{\log T(r,f)}{r}=0,\eea
	then
	\begin{align*}
		\parallel\;T(r,f(z+c))=T(r,f)+o(T(r,f)).
	\end{align*}
\end{lem}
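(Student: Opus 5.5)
The plan is to prove the two one-sided estimates
\[
T(r,f(z+c))\le T(r,f)+o(T(r,f)),\qquad T(r,f)\le T(r,f(z+c))+o(T(r,f(z+c))),
\]
outside an exceptional set, which is what $\parallel$ records. The second estimate should follow from the first by symmetry: replace $f$ by $f_c(z):=f(z+c)$ and $c$ by $-c$. This swap is legitimate only if $f_c$ also satisfies (\ref{Ss}), and that will come out of the crude comparison in the second step below. Throughout, $T(r,f)=m(r,f)+N(r,f)$ as in the introduction, and I treat the pole part and the proximity part separately.

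\emph{Counting part.} Let $\nu=\mu^\infty_f$ and $\nu_c=\mu^\infty_{f_c}$, so that $\nu_c(z)=\nu(z+c)$. The support of $\nu_c$ in the closed ball $\mathbb{C}^n[t]$ is the translate by $-c$ of the part of $\operatorname{supp}\nu$ lying in the ball of radius $t$ centred at $c$, and that ball is contained in $\mathbb{C}^n[t+|c|]$. Since $\nu\ge0$ and $\upsilon$ is translation invariant, the normalized formula for $n_\nu$ gives
\[
n_{\nu_c}(t)\le \Big(\tfrac{t+|c|}{t}\Big)^{2n-2} n_\nu(t+|c|).
\]
Integrating against $dt/t$ then yields $N(r,f_c)\le (1+o(1))N(r+|c|,f)+O(\log r)$.

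\emph{Proximity part.} I would write $m(r,f_c)\le m(r,f)+m\big(r,\tfrac{f_c}{f}\big)$. The quotient term should be estimated by the several-variable difference analogue of the logarithmic derivative lemma: for $f$ satisfying (\ref{Ss}), $m\big(r,\tfrac{f(z+c)}{f(z)}\big)=o(T(r,f))$ outside an exceptional set. This lemma is proved via the Poisson--Jensen representation of the plurisubharmonic function $\log\big(|f_0|^2+|f_1|^2\big)$ on balls of radius $R=r+|c|+\delta$. The same Poisson kernel bound also gives the crude estimate $T(r,f_c)\le C(r)\,T(r+2|c|,f)$ with $C(r)$ of polynomial size. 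Because $\log$ of a polynomial factor is $O(\log r)=o(r)$, this crude estimate shows that $f_c$ inherits (\ref{Ss}), which justifies the symmetry step above. Combining the two parts gives
\[
T(r,f_c)\le T(r+|c|,f)+o(T(r,f)).
\]

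\emph{Main obstacle.} The hardest step is to pass from $T(r+|c|,f)$ back to $T(r,f)$, i.e.\ to prove the Borel-type growth lemma: if $T$ is non-decreasing, tends to infinity, and $\limsup_{r\to\infty}\frac{\log T(r)}{r}=0$, then $T(r+|c|)=T(r)+o(T(r))$ outside a set of finite linear measure. I would argue by contradiction. Suppose $T(r+|c|)\ge(1+\varepsilon)T(r)$ on a set $E$ of infinite measure. Choose a sequence $r_k\in E$ with $r_{k+1}\ge r_k+|c|$, built by covering $E$ with intervals of length $|c|$. Iterating along the sequence gives $T(r_0+k|c|)\ge(1+\varepsilon)^{k/2}T(r_0)$ for infinitely many $k$, which forces $\log T(r)\gtrsim \frac{\log(1+\varepsilon)}{2|c|}\,r$ along a sequence $r\to\infty$. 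That contradicts (\ref{Ss}).

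The delicate points are ensuring monotonicity of $T$ when chaining the intervals, and merging the exceptional sets from this lemma and from the difference logarithmic-derivative lemma so that the final estimate holds outside a single set of finite measure. Both are standard bookkeeping. The real analytic content lies in the difference logarithmic-derivative estimate under the weak hypothesis (\ref{Ss}), and that is where I expect most of the work.
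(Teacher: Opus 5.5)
The paper gives no proof of this lemma; it is quoted from \cite[Theorem 2.2]{CX1}. Your architecture is the standard one: split $T=m+N$, shift the pole divisor, control $m(r,f(z+c)/f(z))$ by a logarithmic difference lemma, compare $T(r+|c|,f)$ with $T(r,f)$, and get the reverse inequality by symmetry. The genuine gap is in the step you call the main obstacle. The Borel-type lemma with an exceptional set of finite linear measure is \emph{false} under (\ref{Ss}), and your argument for it does not work.

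Chaining along $|c|$-separated points $r_0<r_1<\cdots$ of $E$ only gives $T(r_k)\ge(1+\varepsilon)^kT(r_0)$. To turn this into $\log T(r)\gtrsim r$ you need $r_k=O(k)$, i.e.\ that $E\cap[1,r]$ has measure at least $\delta r$ for arbitrarily large $r$. Infinite measure does not give that, so your inequality $T(r_0+k|c|)\ge(1+\varepsilon)^{k/2}T(r_0)$ does not follow.

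Concretely, take $|c|=1$ and let $T$ be continuous, constant on each $[(k-1)^2+1,k^2]$, with $T(k^2+x)=4^xT(k^2)$ for $0\le x\le 1$. Then $T$ is non-decreasing and $\log T(r)=(\log 4+o(1))\sqrt r=o(r)$. Yet $T(r+1)\ge 2T(r)$ for every $r\in\bigcup_{k\ge 2}[k^2-\tfrac12,k^2]$, a set of infinite measure. Taking instead $T(r)=\int_1^r A(t)\,dt/t$, with $A$ a step function multiplied by $k^3$ at $t=k^2$, gives the same phenomenon for a function convex in $\log r$, as characteristic functions are.

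What your chaining argument does prove is the correct version. Suppose $E_\varepsilon=\{r: T(r+|c|)\ge(1+\varepsilon)T(r)\}$ had upper density $\delta>0$. A greedy choice would give, for arbitrarily large $R$, roughly $\delta R/(2|c|)$ points of $E_\varepsilon\cap[1,R]$ with gaps at least $|c|$. Hence $\log T(R+|c|)\ge\frac{\delta\log(1+\varepsilon)}{2|c|}R+O(1)$, contradicting (\ref{Ss}). A diagonal choice over $\varepsilon=1/j$ then gives a single set $E$ with $\limsup_{r\to\infty}\frac1r\int_{E\cap[1,r]}dt=0$ off which $T(r+|c|)=(1+o(1))T(r)$.

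This zero-upper-density exceptional set is how the result is established in \cite{CX1}, following Zheng--Korhonen in one variable. The logarithmic difference lemma under (\ref{Ss}) likewise holds only off a set of zero upper density. Since finite unions of such sets again have zero upper density, your merging of exceptional sets goes through, but the conclusion is not a finite-measure statement.

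Relatedly, the Poisson--Jensen estimate for $m(r,f(z+c)/f(z))$ on the sphere of radius $R$ only gains a factor comparable to $|c|/(R-r)$. So the choice $R=r+|c|+\delta$ with $\delta$ fixed cannot yield $o(T(r,f))$. You must let $R-r=h(r)\to\infty$, which requires the growth comparison $T(r+h(r),f)=(1+o(1))T(r,f)$ for a suitably slowly growing $h$. That comparison, too, holds only off a set of zero upper density.
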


\begin{lem}\label{ARK}\cite[Theorem 2.1]{Kramer} If $f$ is an entire function in $\mathbb{C}^{m} (m \geq 1)$, 
	not identically zero, then $\displaystyle f = P e^{h}$, where $P$ is a polynomial and $h$ is entire, if and only 
	if the intersection of $Z(f)$ with some $\Delta_{r}$ is compact, where $Z(f)$ is the set of zeros of $f$ and 
	$\Delta_{r} = \{ z \in \mathbb{C}^{m} : r_{1}|z_{1}| = \cdots = r_{m}|z_{m}|\}$, $r_{1}, r_{2}, \ldots , r_{m}$ 
	are positive constants. \end{lem}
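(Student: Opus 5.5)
The statement is the cited result \cite[Theorem 2.1]{Kramer}. I would prove it through the characterization recalled in the introduction: a non-negative divisor is algebraic if and only if it is the zero divisor of a polynomial. If $Z(f)$ is algebraic, with divisor that of a polynomial $P$, then $f/P$ is entire and zero-free on $\mathbb{C}^m$, which is simply connected, so $f/P=e^h$. The proof therefore reduces to showing that $Z(f)$ is algebraic exactly when some $Z(f)\cap\Delta_r$ is compact. Write $T_r=\{w:|w_j|=1/r_j\}$. Then $\Delta_r=\{\lambda w:\lambda\in\mathbb{C},\,w\in T_r\}$ is the complex cone over a real torus.

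For ($\Rightarrow$), let $f=Pe^h$, so $Z(f)=Z(P)$. Write $P=P_d+(\text{lower order terms})$ with $P_d$ the top homogeneous part. I would choose $r$ so that $P_d$ has no zero on $T_r$. This is possible because the amoeba of $P_d$ is a closed proper subset of $\mathbb{R}^m$: its complement contains a component on which a single vertex monomial of the Newton polytope strictly dominates the sum of the other monomials. Take $(\log(1/r_j))_j$ in that complement. Compactness of $T_r$ then gives $|P_d|\ge\delta>0$ on $T_r$. For $w\in T_r$ and $|\lambda|$ large we get
\[|P(\lambda w)|\ge |\lambda|^d\bigl(\delta-O(|\lambda|^{-1})\bigr)>0,\]
uniformly in $w$. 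Hence $Z(P)\cap\Delta_r$ is bounded, and being closed it is compact.

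For ($\Leftarrow$), suppose $Z(f)\cap\Delta_r\subset\{\|z\|\le R\}$. Pick $\rho$ with $\rho\min_j(1/r_j)>R$. Then for every $w\in T_r$ the function $\lambda\mapsto f(\lambda w)$ has no zeros on $|\lambda|\ge\rho$. By compactness of $T_r$, this non-vanishing on the circle $|\lambda|=\rho$ persists for all $w$ in some open connected neighbourhood $U\subset\mathbb{C}^m$ of $T_r$. On $U$ define
\[N(w)=\frac{1}{2\pi\iota}\oint_{|\lambda|=\rho}\frac{\partial_\lambda f(\lambda w)}{f(\lambda w)}\,d\lambda .\]
This is integer-valued and continuous, hence a constant $N$. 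The power sums $\sum_k\lambda_k(w)^s$ of the roots $\lambda_k(w)$ in $|\lambda|<\rho$ are given by similar contour integrals, so they are holomorphic on $U$. By Newton's identities, so are the coefficients of the Weierstrass-type polynomial $p_w(\lambda)=\prod_{k=1}^N(\lambda-\lambda_k(w))$.

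The main obstacle is passing from this local structure to global algebraicity, and there are two gaps. First, the identity $\lambda_k(\mu w)=\mu^{-1}\lambda_k(w)$ is only clear for $\mu$ near a circle, not for all $\mu\in\mathbb{C}^*$. Second, it is not yet shown that zeros of $f(\lambda w)$ with $|\lambda|\ge\rho$ are excluded for $w\in U\setminus T_r$, rather than only for $w\in T_r$. I would deal with both as follows. The product $G(z)=\prod_k(1-\lambda_k(w)\,\cdot)$, evaluated along the line through $w$, defines on the open cone $\mathbb{C}^*U$ a holomorphic function whose zero divisor agrees with that of $f$. Equal degree $N$ along every line of an open set of directions then bounds the projective counting function $n_\nu(t)$ of the divisor of $f$ on this cone by $N$. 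The step I am least sure closes is extending this bound to all directions. There I would argue that $Z(f)$ has, near infinity, the local structure of an $N$-sheeted cover over an open subset of $\mathbb{P}^{m-1}$. By Rudin's/Stoll's criterion this forces the divisor to be algebraic of degree $N$, so Stoll's result gives $n_\nu$ bounded and $N_\nu=O(\log r)$ globally, which concludes the proof.
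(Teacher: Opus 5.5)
The paper gives no proof of this lemma; it is quoted from Kramer, so your argument has to stand on its own. Your ($\Rightarrow$) half is correct: vertex-monomial dominance does give a torus $T_r$ on which $P_d\neq 0$, and the estimate along $\lambda w$ is uniform by compactness.

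The ($\Leftarrow$) half has a genuine gap, exactly at your ``second gap'', and your sketched repair does not close it. You build $G$ only from the roots in $|\lambda|<\rho$. Saying that $G$ has the same zero divisor as $f$ on $\mathbb{C}^*U$ \emph{is} the claim that $f(\lambda w)\neq 0$ for $|\lambda|\ge\rho$ and $w\in U$, which is what has to be proved. Your argument uses nothing about $T_r$ beyond compactness and connectedness, and with only that the conclusion is false. In $\mathbb{C}^3$ take $f(z)=\sin z_2-2$ and replace $T_r$ by the circle $K=\{(1,0,e^{i\theta}):\theta\in\mathbb{R}\}$. Then $f(\lambda w)=-2$ for $w\in K$. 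All your steps go through verbatim: non-vanishing on $|\lambda|=\rho$ near $K$, constancy of $N(w)$ (here $N=0$), and holomorphic coefficients, giving $G\equiv 1$. Yet $Z(f)$ is the union of infinitely many hyperplanes $\{z_2=b\}$ with $\sin b=2$. For $w$ near $K$ with $w_2\neq 0$, the line $\mathbb{C}w$ meets $Z(f)$ at $\lambda=b/w_2$, arbitrarily far out. What distinguishes $T_r$ from $K$ is that $T_r$ is a maximally totally real torus, hence not pluripolar, whereas $K$ lies in the pluripolar hyperplane $\{w_2=0\}$. Some pluripotential (or equivalent uniqueness) input of this kind is indispensable.

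One way to supply it is as follows. Put $q(w,\lambda)=f(\lambda w)/p_w(\lambda)$ with $U$ connected. It is holomorphic on $U\times\mathbb{C}$, zero-free on $U\times\{|\lambda|\le\rho\}$ and zero-free on $T_r\times\mathbb{C}$. Let $a_k(w)$ be the Taylor coefficients of $1/q(w,\cdot)$ at $0$. They are holomorphic on $U$ and locally bounded by $M\rho^{-k}$. Moreover $\limsup_k\frac1k\log|a_k(w)|=-\log d(w)$, where $d(w)\in(\rho,\infty]$ is the modulus of the nearest zero of $q(w,\cdot)$. By Hartogs' lemma the upper regularization $v$ of this $\limsup$ is plurisubharmonic on $U$. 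By Bedford--Taylor, $v=-\infty$ on $T_r$ outside a pluripolar set. Since $T_r$ is not pluripolar, $v\equiv-\infty$, so $d\equiv\infty$ on $U$. Hence $Z(f)$ misses $\{\lambda w: w\in U,\ |\lambda|\ge\rho\}$. In unitary coordinates whose $z_m$-axis is $\mathbb{C}w_0$ for some $w_0\in T_r$, this gives $Z(f)\subset\{|z_m|\le C(1+\|z'\|)\}$. Now count zeros along the parallel lines $\lambda\mapsto f(z',\lambda)$. Their number is a constant $N'$, and their power sums are entire in $z'$ with polynomial growth, hence polynomials. This yields a polynomial $P$ with $f/P$ entire and zero-free. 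This also replaces your last step, which is not right as stated. A bound $N$ on zero counts along lines through $0$ with directions in an open set does not bound $n_\nu(t)$, which averages over \emph{all} directions. Rudin's criterion needs avoidance of a cone, i.e.\ counting along parallel lines. Your ``first gap'' about $\lambda_k(\mu w)$ then becomes irrelevant.
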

\begin{rem}\label{R1} If an entire function $f$ does not have any zero, then $Z(f) = \emptyset$ and so its 
	intersection with any $\Delta_{r}$ is compact. Hence, in this case $\displaystyle f = e^{h}$ for some entire 
	function $h$. \end{rem}

\section {{\bf Proof of Theorem \ref{t1}}}
\begin{proof} Let $(f_1(z),f_2(z))$ be a pair of finite order non-constant entire solution of the system of equations (\ref{DE1}). Now using Lemma \ref{L.3} to the system of equations (\ref{DE1}), we get
	\bea\label{e3.1} \parallel\;n_i T(r,f_i)+o(T(r,f_i))=m_i T(r,f_j(z+c))+o(T(r,f_j(z+c))),\eea
	where $i,j\in\{1,2\}$ such that $i\neq j$. Since $\rho(f_i)<+\infty$, we have
	\[\limsup_{r \to \infty} \frac{\log T(r,f_i)}{r}=0\]
	for $i=1,2$ and so by Lemma \ref{L.7}, we have
	\bea\label{e3.2} \parallel\;T(r,f_i(z+c))=T(r,f_i)+o(T(r,f_i)),\eea
	where $i\in\{1,2\}$. Now from (\ref{e3.1}) and (\ref{e3.2}), we get
	\bea\label{e3.3} \parallel\;n_i T(r,f_i)+o(T(r,f_i))=m_i T(r,f_j)+o(T(r,f_j)),\eea
	where $i,j\in\{1,2\}$ such that $i\neq j$. Hence (\ref{e3.3}) shows that $o(T(r,f_i))$ can be replaced by $o(T(r,f_j))$, where $i,j\in\{1,2\}$ such that $i\neq j$. 
	
	On the other hand, using Lemma \ref{L.3} and (\ref{e3.2}) to (\ref{DE1}), we obtain
	\bea\label{lm.2}\parallel\;m_i T(r, f_j)&=& m_i T(r, f_j(z+c))+o(T(r, f_j))\\
	& =&T\left(r, f_j^{m_i}(z+c)\right)+o(T(r, f_j))\nonumber\\
	& =&T\left(r,f_i^{n_i}\right)+o(T(r, f_j)) \nonumber\\
	& =&n_i T(r, f_i)+o(T(r, f_j)),\nonumber\eea
	where $i,j\in\{1,2\}$ such that $i\neq j$.\par
	
	\medskip
	
	First we suppose that $m_1m_2-n_1n_2>0$. Then from  (\ref{lm.2}), one can easily deduce that
	\[\parallel\;(m_1m_2-n_1n_2)T(r,f_j)= o(T(r, f_j)),\]
	which is impossible, where $j\in\{1,2\}$. Therefore in this case, the system of equations (\ref{DE1}) do not have any finite order non-constant entire solutions.\par

	\medskip\medskip
	Next we suppose that $m_1m_2-n_1n_2\leq 0$. If $n_1n_2-m_1m_2>0$, then from  (\ref{lm.2}), we get
	\[\parallel\;(n_1n_2-m_1m_2)T(r,f_i)= o(T(r, f_i)),\]
	which is impossible, where $i\in\{1,2\}$. So the system of equations (\ref{DE1}) do not have any finite order non-constant entire solutions. Hence 
	\bea\label{e3.4} n_1n_2=m_1m_2.\eea
	
	Let
	\begin{align}\label{lm.3} 
		h_i(z)=\frac{f_i^{n_i}(z)-1}{f_i^{n_i}(z)},
	\end{align}
	where $i\in\{1,2\}$. Clearly $h_i$ is a non-constant meromorphic function in $\mathbb{C}^n$. Using Lemma \ref{L.3} to (\ref{lm.3}), we get
	\begin{align}\label{lm.4}
		\parallel T(r,h_i)+o(T(r,h_i))=n_iT(r,f_i)+o(T(r,f_i)),
	\end{align}
	where $i\in\{1,2\}$. Now using (\ref{DE1}) to (\ref{lm.3}), one can easily deduce that
	\begin{align*}
		\parallel\;\ol N(r,h_i)\leq \ol N(r,0, f_i^{n_i})=\ol N(r,0,f_i)+o(T(r,f_i)),
	\end{align*}
	\begin{align*}
		\parallel\;\ol N(r,0,h_i)=\ol N(r,1,f_i^{n_i})\leq \ol N(r,0,f_j^{m_i}(z+c))=\ol N(r,0,f_j(z+c))+o(T(r,f_j(z+c)))
	\end{align*}
	and $\parallel \ol N(r,1,h_i)=0$,
	where $i,j\in\{1,2\}$. Therefore in view of the first main theorem and using Lemma \ref{L.2} and (\ref{lm.4}), we get
	\begin{align}\label{lm.6} 
		\parallel\;n_i T(r,f_i)&=T(r,h_i)+o(T(r,h_i))\\&\leq \ol N(r,h_i)+\ol N(r,0,h_i)+\ol N(r,1,h_i)+o(T(r,h_i))\nonumber\\&\leq
		\ol N(r,0,f_i)+\ol N(r,0,f_i(z+c))+o(T(r,f_i))+o(T(r,f_i(z+c)))\nonumber\\&\leq
		T(r,f_i)+T(r,f_j(z+c))+o(T(r,f_i))+o(T(r,f_j(z+c))),\nonumber
	\end{align}
	where $i,j\in\{1,2\}$. Using (\ref{e3.1})-(\ref{e3.3}) to (\ref{lm.6}), we get
	\begin{align*}
		\parallel\;\left(n_i-1-\frac{n_i}{m_i}\right)T(r,f_i)\leq o(T(r,f_i)),
	\end{align*}
	where $i\in\{1,2\}$ and so 
	\begin{align}\label{e3.7}
		\frac{1}{n_i}+\frac{1}{m_i}\geq 1
	\end{align}
	for  $i\in\{1,2\}$.
	
	The constraints (\ref{e3.4}) and (\ref{e3.7})
	reduce the proof to three mutually exclusive
	parameter regimes. Their logical relationship
	is summarized in Figure~\ref{fig:classification}.  \par
	\begin{figure}[H]
		\centering
		
		\begin{tikzpicture}[
			scale=0.9,
			transform shape,
			every node/.style={
				draw,
				rounded corners,
				align=center,
				font=\footnotesize,
				minimum width=2.2cm,
				minimum height=0.7cm
			}
			]
			
		\node[
		draw,
		rounded corners,
		text width=3.0cm,
		minimum height=0.8cm,
		align=center
		] (A)
		{\textbf{\;\;\;\;System (\ref{DE1})}};
			
			\node (B) at (-4,-2.2)
			{$n_1=m_1$};
			
			\node (C) at (0,-2.2)
			{$n_1>m_1$};
			
			\node (D) at (4,-2.2)
			{$n_1<m_1$};
			
			\node (E) at (-4,-4.4)
			{Trigonometric\\Solutions};
			
			\node (F) at (0,-4.4)
			{Exponential$\times$Periodic\\Solutions};
			
			\node (G) at (4,-4.4)
			{Exponential$\times$Periodic\\Solutions};
			
			\draw[->,thick] (A)--(B);
			\draw[->,thick] (A)--(C);
			\draw[->,thick] (A)--(D);
			
			\draw[->,thick] (B)--(E);
			\draw[->,thick] (C)--(F);
			\draw[->,thick] (D)--(G);
			
		\end{tikzpicture}
		
		\caption{Classification structure established in Theorem \ref{t1}.}
		\label{fig:classification}
		
	\end{figure}
	
	\medskip
	{\bf Case 1.} Let $n_1= m_1$. Then (\ref{e3.4}) gives  $n_2=m_2$. Since $n_i+m_i> 2$ for $i=1,2$, it follows from (\ref{e3.7}) that $m_1=n_1=m_2=n_2=2.$  Then from (\ref{DE1}), we get 
	\bea\label{e3.10} (f_1(z)+\iota f_2(z+c))(f_1(z)-\iota f_2(z+c))=1\eea
	and
	\bea\label{e3.11} (f_2(z)+\iota f_1(z+c))(f_2(z)-\iota f_1(z+c))=1.\eea
	
	Clearly from (\ref{e3.10}) and (\ref{e3.11}), we see that the functions $f_1(z)+\iota f_2(z+c)$, $f_1(z)-\iota f_2(z+c)$, $f_2(z)+\iota f_1(z+c)$ and $f_2(z)-\iota f_1(z+c)$ have no zeros. In view of Lemma \ref{ARK} and Remark \ref{R1}, we may assume that
	\bea\label{e3.12} f_1(z)+\iota f_2(z+c)=e^{\iota P_1(z)},\eea
	\bea\label{e3.13} f_1(z)-\iota f_2(z+c)=e^{-\iota P_1(z)},\eea
	\bea\label{e3.14} f_2(z)+\iota f_1(z+c)=e^{\iota P_2(z)}\eea
	and
	\bea\label{e3.15} f_2(z)-\iota f_1(z+c)=e^{-\iota P_2(z)},\eea
	where $P_1(z)$ and $P_2(z)$ are entire functions in $\mathbb{C}^n$. Note that $\rho(f_i)<+\infty$ and $\rho(f_i(z+c))<+\infty$ for $i=1,2$. Now from (\ref{e3.12}) and (\ref{e3.14}), we see that $\rho\left(e^{\iota P_1(z)}\right)<+\infty$ and $\rho\left(e^{\iota P_2(z)}\right)<+\infty$ and so by Lemma \ref{L.5}, we deduce that  $P_1(z)$ and $P_2(z)$ are polynomials in $\mathbb{C}^n$.

	Solving (\ref{e3.12}) and (\ref{e3.13}), we get
	\bea\label{e3.16} f_1(z)=\frac{e^{\iota P_1(z)}+e^{-\iota P_1(z)}}{2}=\cos (P_1(z))\eea
	and
	\bea\label{e3.17} f_2(z+c)=\frac{e^{\iota P_1(z)}-e^{-\iota P_1(z)}}{2\iota}=\sin (P_1(z)).\eea
	
	Again solving (\ref{e3.14}) and (\ref{e3.15}), we obtain
	\bea\label{e3.18} f_2(z)=\frac{e^{\iota P_2(z)}+e^{-\iota P_2(z)}}{2}=\cos (P_2(z))\eea
	and
	\bea\label{e3.19} f_1(z+c)=\frac{e^{\iota P_2(z)}-e^{-\iota P_2(z)}}{2\iota}=\sin (P_2(z)).\eea
	
	Since $f_1$ and $f_2$ are non-constant, from (\ref{e3.16}) and (\ref{e3.18}), we deduce that both $P_1$ and $P_2$ are non-constant polynomials in $\mathbb{C}^n$. 
	
	Now from (\ref{e3.16}) and (\ref{e3.19}), we have
	\bea\label{e3.20} \iota e^{\iota{(P_1(z+c)-P_2(z))}}+\iota e^{-\iota{(P_1(z+c)+P_2(z))}}+e^{-2\iota P_2(z)}=1.\eea
	
	Again from (\ref{e3.17}) and (\ref{e3.18}), we get
	\bea\label{e3.21} \iota e^{\iota {(P_2(z+c)-P_1(z))}}+\iota e^{-\iota{(P_2(z+c)+P_1(z))}}+e^{-2\iota P_1(z)}=1.\eea
	
	Thus, using Lemma \ref{L.6} to (\ref{e3.20}) and (\ref{e3.21}), we have respectively
	\beas \text{either}\;\;  \iota e^{\iota{(P_1(z+c)-P_2(z))}}=1 \;\;\text{or}\;\;\iota e^{\iota {(P_1(z+c)+P_2(z))}}=1.\eeas
	and
	\beas \text{either}\;\;\iota e^{\iota {(P_2(z+c)-P_1(z))}}=1 \;\;\text{or}\;\ \iota e^{-\iota{(P_2(z+c)+P_1(z))}}=1.\eeas

The exponential equalities extracted from (\ref{e3.20}) and (\ref{e3.21}) produce four algebraic alternatives; some lead to contradictions while others produce admissible cosine-type solutions. For reader convenience, we summarize these branches and their outcomes in the following boxed logic tree:

		\begin{figure}[htbp]
	\centering
	\fbox{%
		\begin{minipage}{0.9\linewidth}
			\small
			Four algebraic exponential alternatives from (3.20),(3.21):
			\begin{enumerate}
				\item (3.22): $P_1(z+c)-P_2(z)$ and $P_2(z+c)-P_1(z)$ constants $\Rightarrow$ admissible (leads to cos solutions).
				\item (3.25): $P_1(z+c)-P_2(z)$ and $P_2(z+c)+P_1(z)$ constants $\Rightarrow$ contradiction.
				\item (3.26): $P_1(z+c)+P_2(z)$ and $P_2(z+c)-P_1(z)$ constants $\Rightarrow$ contradiction.
				\item (3.27): $P_1(z+c)+P_2(z)$ and $P_2(z+c)+P_1(z)$ constants $\Rightarrow$ admissible (another cos family).
			\end{enumerate}
		\end{minipage}
	} \caption{Logic tree for Sub-cases in Case 1.}
	\label{fig:subcases}
\end{figure}
In view of above, we consider the following four sub-cases.\par
	\medskip
	{\bf Sub-case 1.1.} Let
	\bea\label{e3.22} \begin{cases}  \iota e^{\iota {(P_1(z+c)-P_2(z))}}=1,\\ \iota e^{\iota{(P_2(z+c)-P_1(z))}}=1.\end{cases}\eea
	
	It follows from (\ref{e3.22}) that $P_1(z+c)-P_2(z)$ and $P_2(z+c)-P_1(z)$ are both constants. Consequently both $P_1(z+2c)-P_1(z)$ and $P_2(z+2c) -P_2(z)$ are constants.
	Since both $P_1(z+2c)-P_1(z)$ and $P_2(z+2c) -P_2(z)$ are constants, it follows we may assume that
	\bea\label{e3.24}
	\begin{cases}
		P_1(z)=A_{11}z_1+A_{12}z_2+\ldots+A_{1n}z_n+ Q_1(z_1,z_2,\ldots,z_n), \\
		P_2(z)=B_{11}z_1+B_{12}z_2+\ldots+B_{1n}z_n+ Q_2(z_1,z_2,\ldots,z_n),
	\end{cases}
	\eea
	where $A_{1i}, B_{1i}\in\mathbb{C}$ for $i=1,2,\ldots,n$ and $ Q_i(z_1,z_2,\ldots,z_n)$ is a polynomial in $\mathbb{C}^{n}$ such that
	$\ Q_i(z+2c)= Q_i(z)$ for $i=1,2$. On the other hand, (\ref{e3.22}) gives
	\[e^{i(P_1(z+2c)-P_1(z))}=-1 \;\; {\text{and}} \;\;e^{i(P_2(z+2c)-P_2(z))}=-1.\]
	
	Clearly from (\ref{e3.24}), we have respectively
	\[e^{2\iota (A_{11}c_1+A_{12}c_2+\ldots+A_{1n}c_n)}=-1\;\;\text{and}\;\;e^{2\iota (B_{11}c_1+B_{12}c_2+\ldots+B_{1n}c_n)}=-1.\]
	
	Finally, from (\ref{e3.16}) and (\ref{e3.18}), we may assume that
	\[(f_1(z),f_2(z))=\left(\cos (L_1(z)+Q_1(z)),\;\cos (L_2(z)+Q_2(z)\right)),\]
	where $L_1(z)=A_{11}z_1+A_{12}z_2+\ldots+A_{1n}z_n$, $L_2(z)=B_{11}z_1+B_{12}z_2+\ldots+B_{1n}z_n$, $A_{1i},B_{1i}\in\mathbb{C}$ for $i=1,2,\ldots,n$ such that $e^{2\iota (A_{11}c_1+A_{12}c_2+\ldots+A_{1n}c_n)}=-1$, $e^{2\iota (B_{11}c_1+B_{12}c_2+\ldots+B_{1n}c_n)}=-1$ and
	$Q_i(z_1,z_2,\ldots,z_n)$ is a polynomial in $\mathbb{C}^n$ such that $Q_i(z+2c)=Q_i(z)$ for $i=1,2$.

	\medskip
	{\bf Sub-case 1.2.} Let
	\bea\label{e3.25} \begin{cases}  \iota e^{\iota{(P_1(z+c)-P_2(z))}}=1,\\\iota e^{-\iota{(P_2(z+c)+P_1(z))}}=1.\end{cases}\eea
	
	Clearly from (\ref{e3.25}), it follows that $P_1(z+c)-P_2(z)$ and $P_2(z+c)+P_1(z)$ are both constants. This means $P_2(z+2c)+P_2(z)$ is also a constant, which contradicts the fact that $P_2(z)$ is a non-constant polynomial.\par

	\medskip
	{\bf Sub-case 1.3.} Let
	\bea\label{e3.26} \begin{cases} \iota e^{-\iota{(P_1(z+c)+P_2(z))}}=1,\\ \iota e^{\iota{(P_2(z+c)-P_1(z))}}=1.\end{cases}\eea
	
	We deduce from (\ref{e3.26}) that $P_1(z+c)+P_2(z)$ and $P_2(z+c)-P_1(z)$ are both constants. This means $P_1(z+2c)+P_1(z)$ is also a constant, which is a contradiction.\par

	\medskip
	{\bf Sub-case 1.4.} Let
	\bea\label{e3.27} \begin{cases}  \iota e^{-\iota{(P_1(z+c)+P_2(z))}}=1,\\ \iota e^{-\iota{(P_2(z+c)+P_1(z))}}=1.\end{cases}\eea
	
	Clearly from (\ref{e3.27}), it follows that $P_1(z+c)+P_2(z)$ and $P_2(z+c)+P_1(z)$ are both constants. Consequently $P_1(z+2c)-P_1(z)$ and $P_2(z+2c) -P_2(z)$ are also constants.
	Clearly from (\ref{e3.27}), we have $e^{-\iota (P_i(z+2c)-P_i(z))}=1$ for $i=1,2$. Now proceeding in the same way as done in the proof of Sub-case 1.1, we can conclude that
	\[(f_1(z),f_2(z))=\left(\cos (\tilde L_1(z)+R_1(z)),\;\cos (\tilde L_2(z)+R_2(z))\right),\]
	where $\tilde L_1(z)=\tilde A_{11}z_1+\tilde A_{12}z_2+\ldots+\tilde A_{1m}z_m$, $\tilde L_2(z)=\tilde B_{11}z_1+\tilde B_{12}z_2+\ldots+\tilde B_{1n}z_n$, $\tilde A_{1i}, \tilde B_{1i}\in\mathbb{C}$, $i=1,2,\ldots,n$ such that $e^{-2\iota (\tilde A_{11}c_1+\tilde A_{12}c_2+\ldots+\tilde A_{1n}c_n)}=1$, $e^{-2\iota (\tilde B_{11}c_1+\tilde B_{12}c_2+\ldots+\tilde B_{1n}c_n)}=1$ and $R_i(z)$ is a polynomial in $\mathbb{C}^n$ such that 
	$R_i(z+2c)=R_i(z)$ for $i=1,2$.

	\medskip
	\par{\bf Case 2.} Let $n_1 > m_1$. Then from (\ref{e3.7}), we have $m_1=1$ and $n_1\geq 2$. Also from (\ref{e3.4}), we have $m_2=n_1n_2\geq 2$ and so from (\ref{e3.7}), we get $n_2=1$. Consequently $m_2=n_1\geq 2$. Now (\ref{DE1}) gives us \beas\begin{cases}
		f_1^{n_1}(z)+f_2(z+c)=1 \\ f_2(z)+f_1^{n_1}(z+c)=1.
	\end{cases} \eeas 
	
	By a simple calculation we can see that $f_1^{n_1}(z+2c)=f_1^{n_1}(z)$ and $f_2(z+2c)=f_2(z)$. Consequently 
	\bea\label{xx}
	f_1(z+2c)=tf_1(z),
	\eea
	where $t^{n_1}=1$.
	Let $f_1$ and $g_1$ be any two solutions of (\ref{xx}). If we take $h_1=\dfrac{f_1}{g_1}$ , then from (\ref{xx}), we get $h_1(z+2c) = h_1(z)$ for all $z\in\mathbb{C}^n$ and so $h_1$ is a $2c$-periodic entire function in $\mathbb{C}^n$. On the other hand 
	\[g_1(z)=e^{a_1z_1+\ldots+a_nz_n},\]
	where $(a_1,\ldots,a_n)\in\mathbb{C}^n$ such that $e^{2(a_1c_1+\ldots+a_nc_n)}=t$ is a solution of (\ref{xx}). Hence the solution of (\ref{xx}) is of the form \[f_1(z)=e^{a_1z_1+\ldots+a_nz_n}h_1(z),\]
	where $h_1$ is $2c$-periodic entire function in $\mathbb{C}^n$. Since $f_2(z+2c)=f_2(z)$, we may take \[f_2(z)=e^{b_1z_1+\ldots+b_nz_n}h_2(z),\]
	where $(b_1,\ldots,b_n)\in\mathbb{C}^n$ such that $e^{2(b_1c_1+\ldots+b_nc_n)}=1$ and $h_2$ is $2c$-periodic entire function in $\mathbb{C}^n$, Finally, we have
	\begin{align*}
		(f_1(z), f_2(z))=\left(e^{a_1z_1+\ldots+a_nz_n}h_1(z),\;e^{b_1z_1+\ldots+b_nz_n}h_2(z)\right),
	\end{align*}
	where $(a_1,\ldots,a_n),(b_1,\ldots,b_n) \in\mathbb{C}^n$ such that $e^{2(a_1c_1+\ldots+a_nc_n)}=t\;(t^{n_1}=1)$ and $e^{2(b_1c_1+\ldots+b_nzc_n)}=1$,  $h_1$ and  $h_2$ are  $2c$-periodic entire functions in $\mathbb{C}^n$.
	

	\medskip
	\par{\bf Case 3.} Let $n_1<m_1$. Then from (\ref{e3.7}), we have $n_1=1$ and $m_1\geq 2$. Also from (\ref{e3.4}), we have $n_2=m_1m_2\geq 2$ and so from (\ref{e3.7}), we get $m_2=1$. Consequently $n_2=m_1\geq 2$. Now (\ref{DE1}) gives us
	\bea\label{eq3.28}\begin{cases}
		f_1(z)+f_2^{m_1}(z+c)=1 \\ f_2^{m_1}(z)+f_1(z+c)=1.
	\end{cases} \eea 
	
	By a simple calculation we can see that $f_2^{m_1}(z+2c)=f_2^{m_1}(z)$ and $f_2(z+2c)=f_2(z)$. Now proceeding in the same way as done in the proof of Case 2, 
	we have
	\begin{align*}
		(f_1(z), f_2(z))=\left(e^{\tilde a_1z_1+\ldots+\tilde a_nz_n}\tilde h_1(z),\;e^{\tilde b_1z_1+\ldots+\tilde b_nz_n}\tilde h_2(z)\right),
	\end{align*}
	where $(\tilde a_1,\ldots,\tilde a_n),(\tilde b_1,\ldots,\tilde b_n) \in\mathbb{C}^n$ such that $e^{2(\tilde a_1c_1+\ldots+\tilde a_nc_n)}=1$ and $e^{2(\tilde b_1c_1+\ldots+\tilde b_nzc_n)}=t\;(t^{m_1}=1)$,  $\tilde h_1$ and $\tilde h_2$ are  $2c$-periodic entire functions in $\mathbb{C}^n$.
\end{proof}

\vspace{0.1in}
{\bf Compliance of Ethical Standards:}\par

{\bf Conflict of Interest.} The authors declare that there is no conflict of interest regarding the publication of this paper.\par

{\bf Data availability statement.} Data sharing not applicable to this article as no data sets were generated or analysed during the current study.

\end{document}